\documentclass[a4paper,10pt]{amsart}

\usepackage{amssymb}

\newtheorem{theorem}{Theorem}
\newtheorem{lemma}[theorem]{Lemma}
\newtheorem{corollary}[theorem]{Corollary}
\newtheorem{proposition}[theorem]{Proposition}
\newtheorem{definition}{Definition}
\newtheorem{conjecture}[theorem]{Conjecture}

\def\R{\mathbb{R}}

\def\set#1{\left\{\, #1 \,\right\}}
\def\abs #1{\left| \,#1\, \right|}
\def\norm #1{\left\| \,#1\, \right\|}

\def\Leg{\mathcal{L}}

\def\calC{\mathcal{C}}

\def\calT{\mathcal{T}}

\newcommand\ARMA{\emph{Arch. Ration. Mech. Anal.} }

\newcommand\CMDA{\emph{Celestial Mech. Dynam. Astronom.} }
\newcommand\ETDS{\emph{Ergodic Theory Dynam. \mbox{Systems}} }
\newcommand\JDE{\emph{J. Differential Equations} }
\newcommand\NODEA{\emph{NoDEA Nonlinear Differential Equations Appl.} }
\newcommand\PAMS{\emph{Proc. Amer. Math. Soc.} }

\begin{document}

\title[Free time minimizers of the $N$-body problem]
{On the free time minimizers of the\\ Newtonian $N$-body problem}

\author{Adriana da Luz}
\address{Centro de Matem\'atica, Universidad
de la Rep\'ublica, Uruguay}
\email{adaluz@cmat.edu.uy}
\author{Ezequiel Maderna}
\address{Centro de Matem\'atica, Universidad
de la Rep\'ublica, Uruguay}
\email{emaderna@cmat.edu.uy}
\date{\today}
\maketitle

\begin{abstract}
In this paper we study the existence and
the dynamics of a very
special class of motions, which satisfy a strong
global minimization property.
More precisely, we call a free time minimizer
a curve which satisfies the least action
principle between any pair of its points
without the constraint of time for the variations.
An example of a free time minimizer defined
on an unbounded interval is a parabolic homothetic
motion by a minimal central configuration.
The existence of a large amount of free time
minimizers can be deduced from the weak KAM theorem.
In particular, for any choice of $x_0$, there should
be at least one free time minimizer $x(t)$
defined for all $t\geq0$ and satisfying $x(0)=x_0$.
We prove that such motions are completely parabolic.
Using Marchal's theorem
we deduce as a corollary that there are no entire
free time minimizers, i.e. defined on $\R$.
This means that the Ma\~n\'e set of the
Newtonian $N$-body problem is empty.
\end{abstract}

\section{Introduction and results}\label{sec:1}

Let $E$ be a finite dimensional Euclidean space,
and let $m_1,\dots,m_N > 0$ be the
masses of $N$ punctual bodies in $E$.
The Newtonian $N$-body problem consists in the
study of the dynamics of these bodies
when the law governing the motion is given
by the Newtonian potential $U:E^N\to (0,+\infty]$
\[U(x)=
\sum_{1\leq i<j\leq N}\;m_i\,m_j\;\norm{r_{ij}}^{-1}\]
where $x=(r_1,\dots,r_N)\in E^N$
is a configuration and $r_{ij}=r_i-r_j$.
This means that a curve $x:(a,b)\to E^N$,
$x(t)=(r_1(t),\dots,r_N(t))$,
such that $r_{ij}(t)\neq 0$ whenever $i\neq j$
is the position vector of a true motion
of the bodies (in a fixed inertial frame)
if an only if their components satisfy the
Newton's equations of motion
\[\ddot r_i=\sum_{j\neq i}\,m_j\,\norm{r_{ij}}^{-3}\,r_{ij}.\]

Newton's equations of motion can be easily
derived from the Hamilton's principle of
stationary action, which states that
the dynamics is determined by a
variational property of the trajectories.
More precisely,
according to Hamilton's principle,
the trajectories must be extremal curves
of the Lagrangian action, thus they
must satisfy the corresponding
Euler-Lagrange equation.
But in fact, as it is well known,
every extremal curve of the Lagrangian
action is locally minimizing,
in the sense that it must solve
the least action principle.
This viewpoint, in the study of the
dynamics of a given mechanical system,
is doubtlessly deep and fruitful.
Nevertheless, during all the last century,
a major problem in the case of
Newtonian gravitational model, prevented
the use of the direct method of
the calculus of variations
to prove the existence of particular solutions.
Namely, the problem is that the Newtonian potential
allows the existence of
curves with singularities (collisions) and
finite Lagrangian action.
A big breakthrough in this
problem was done by the discovery
essentially due to C.~Marchal,
of the fact that minimizing orbits
always avoid collisions
(assuming the obviously necessary
hypothesis $\dim E>1$).

Until now the mathematicians agree upon the fact
that we only dispose of a little
information about the dynamics of an arbitrary
trajectory of the Newtonian $N$-body problem,
except in the case $N\leq 3$.
After the pioneer works of J.~Chazy and K.~Sundman
at the beginning of the last century,
C.~Marchal, H.~Pollard and D.~Saari
(see for instance \cite{MarSaa},
\cite{Poll} and \cite{Saa})
were among the first in continuing
the systematic study
of the general case, that is to say,
without no restriction on the number of bodies nor
on the values of the masses.
A common factor in these works is the
\textsl{a priori} assumption that
the motion is well defined for all the future.
In other words, no singularity is encountered
in any future time. Now, Marchal's theorem
enable us to apply all this general theory
to minimizing solutions on unbounded intervals
which is the subject of this paper.

Recently, important advances were obtained in
the study of these trajectories in a more general
context. More precisely, the work of Barutello,
Terracini and Verzini (\cite{BTV11}, \cite{BTV11b})
on parabolic trajectories, extends the
analysis to a big class of homogeneous potentials.

\subsection{The variational setting
of the N-body problem}

In order to explain our main results,
let us recall before some usual notations.
The Lagrangian is the function
$L:TE^N\to (0,+\infty]$
\[L(x,v)=T(v)+U(x)=
\;\frac{1}{2}\;\sum_{i=1}^N\;m_i\,\norm{v_i}^2\;+\;U(x),\]
thus the Lagrangian action of an absolutely continuous
curve $\gamma:[a,b]\to E^N$ is
\[A(\gamma)=\int_a^b L(\gamma(t),\dot\gamma(t))\,dt\]
and takes values in $(0,+\infty]$.
We will denote by $\calC(x,y,\tau)$
the set of curves binding two given
configurations $x,y\in E^N$ in time $\tau>0$,
that is to say,
\[\calC(x,y,\tau)=
\set{\gamma:[a,b]\to E^N \textrm{ absolutely continuous }
\mid b-a=\tau,\,\gamma(a)=x, \gamma(b)=y},\]
and $\calC(x,y)$ will denote the set of curves
binding two configurations $x,y\in E^N$
without any restriction on time,
\[\calC(x,y)= \bigcup_{\tau>0}\calC(x,y,\tau)\;.\]
In all that follows we will consider curves
which minimize the action on these sets,
so we need to define the function
$\phi:E^N\times E^N\times (0,+\infty)\to\R$,
\[\phi(x,y,\tau)=
\inf\set{A(\gamma)\mid\gamma\in\calC(x,y,\tau)},\]
and the \textsl{critical action potential},
or the \textsl{Ma\~n\'e critical potential}
\[\phi(x,y)=
\inf\set{A(\gamma) \mid \gamma\in\calC(x,y)}=
\inf\set{\phi(x,y,\tau) \mid \tau>0}.\]
defined on $E^N\times E^N$.
It is important to say that in the first
definition, the infimum is reached
for every pair of configurations
$x,y\in E^N$. In the second one the
infimum is reached if and only if $x\neq y$.
As we will see, these facts are essentially due
to the lower semicontinuity of the Lagrangian action.

We can now introduce the object of study of this work.

\begin{definition}
A \textsl{free time minimizer}
defined on an interval $J\subset\R$
is an absolutely continuous curve
$\gamma:J\to E^N$ which satisfies
$A(\gamma\mid_{[a,b]})=\phi(\gamma(a),\gamma(b))$
for all compact subinterval $[a,b]\subset J$.
\end{definition}

There is a more or less evident
way to give an example
of a free time minimizer defined
on an unbounded interval.
we need before to define
the minimal configurations
of the problem. Recall that the
moment of inertia (about the origin)
of a given configuration $x\in E^N$ is
\[I(x)=\sum_{i=1}^N\;m_i\,\norm{r_i}^2.\]
We say that $a\in E^N$ is a (normal)
minimal configuration of the problem
when $I(a)=1$ and
$U(a)=\min\set{U(x)\mid x\in E^N,\; I(x)=1}$.
Also recall that a central configuration
is a configuration $a\in E^N$
which admits homothetic motions i.e.
of the form $x(t)=\lambda(t)\,a$.
This happens if and only if $a$ is
a critical point of $\tilde{U}= I^{1/2}U$
and $\lambda$ satisfies the Kepler equation
$\ddot\lambda\,\lambda^{2}=-U(a)\,I(a)^{-1}$.
Thus minimal configurations are
in particular central configurations.
For a given central configuration $a$,
we can choose a constant $\mu>0$
such that $x(t)= \mu\,t^{2/3}\,a$
is an homothetic motion.
We will see that such motions are free
time minimizers when the configuration
$a$ is minimal.

A less trivial way
to show the existence of free time minimizers
can be obtained using the
weak KAM theory.
It was proved by the second author
(see \cite{Mad}) that
the critical action potential is a
H\"older continuous distance function on $E^N$.
From this it is shown that the Hamilton-Jacobi
equation of the Newtonian $N$-body problem has
global critical solutions in a weak sense.
These solutions are viscosity solutions,
and to each one it can be associated
a lamination of the space of configurations
by free time minimizers.
More precisely, if the Hamiltonian of the
system is $H:T^*M\to [-\infty,+\infty)$,
then given a weak solution $u:E^N\to\R$
of the critical Hamilton-Jacobi equation
$H(x,d_xu)=0$, and any configuration
$x_0\in E^N$, there is a curve
$x:[0,+\infty)\to E^N$ which calibrates
$u$ and such that $x(0)=x_0$.
The fact that the curve is calibrating for
the weak KAM solution means that
$A(x\mid_{[0,t]})=u(x_0)-u(x(t))$ for all $t>0$.
Therefore the curve must be a free
time minimizer, since $u$ is a weak subsolution
of the critical Hamilton-Jacobi equation,
which can be expressed in terms of the action
potential saying that $u(x)-u(y)\leq \phi(x,y)$
for any pair of configurations $x,y\in E^N$.

\subsection{Main results}

In this paper we study the asymptotic
behavior of a free time minimizer, so
we will assume that its domain is an
interval $[t_0,+\infty)$; by
the previous observations, we know
that our object of study is not trivial.

More precisely,
we will prove that such kind
of motions are completely parabolic,
meaning that the velocity of each body goes to
zero as $t\to +\infty$. The origin of this
name comes from Chazy's classification of
the possible final evolution of motions defined
for all future time in the three body problem.
In fact,
we will show that free time minimizers must
have zero energy and that its moment of inertia
must grow like $I(x(t))\sim \alpha\, t^{4/3}$
for some positive constant $\alpha >0$.
From these facts we will deduce that
the motion must be completely parabolic.

On the other hand,
we must recall Marchal's theorem
(see \cite{Chen}, \cite{FerTer} and \cite{Mar})
which will be crucial for our proofs.
It asserts that,
if $\dim E \geq 2$ then the curves
that minimize the action in some
$\calC(x,y,\tau)$, cannot have collisions
in any interior time.
In particular we know that,
except for the one-dimensional case,
every free time minimizers defined in an
open interval
is a solution of Newton's equation.
Therefore, with our notation, we can say
that, if the Euclidean space $E$
has dimension at least $2$ and
$J\subset\R$ is an open interval, then
for every free time minimizer $\gamma:J\to E^N$
we have $\gamma(J)\subset\Omega$, where
$\Omega=\set{x\in E^N
\textrm{ such that }  U(x)<+\infty}$
denotes the set of configurations
without collisions.

Recently, using Marchal's theorem, the second
author has proved in \cite{Mad2}
that every free time minimizer defined on an
unbounded interval must have fixed center of mass.
This result allow us to use a theorem due to Pollard
for give a proof of our main theorem:

\begin{theorem}\label{main}
If $x:[t_0,+\infty)\to E^N$ is a free time minimizer
of the N-body problem in an Euclidean space $E$ of
dimension at least $2$, then $x$ corresponds to
a completely parabolic motion of the bodies.
\end{theorem}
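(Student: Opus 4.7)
The plan is to prove Theorem \ref{main} in three stages, following the outline announced in the introduction. Marchal's theorem ensures that on the open interval $(t_0,+\infty)$ the curve $x$ avoids collisions and thus solves Newton's equations classically, so the energy $H=T-U$ is well defined and constant. Combined with the fact from \cite{Mad2} that the centre of mass of any free time minimiser on an unbounded interval is fixed, the proof reduces to two quantitative properties along $x$: (i) $H\equiv 0$, and (ii) $I(x(t))\sim \alpha\,t^{4/3}$ for some $\alpha>0$. From these, Pollard's theorem closes the argument.

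First I would show $H=0$ by a direct variational argument. Given $[a,b]\subset(t_0,+\infty)$, reparametrise $x\mid_{[a,b]}$ by the affine time rescaling $y(s)=x(a+(s-a)/(1+\epsilon))$, defined on an interval of length $(1+\epsilon)(b-a)$. Then $y$ has the same endpoints as $x\mid_{[a,b]}$, and a short computation gives $A(y)=(1+\epsilon)^{-1}\int_a^b T\,dt+(1+\epsilon)\int_a^b U\,dt$, hence $A(y)-A(x\mid_{[a,b]})=-\epsilon\,H\,(b-a)+O(\epsilon^2)$. Since $y\in\calC(x(a),x(b))$, the inequality $A(y)\geq \phi(x(a),x(b))=A(x\mid_{[a,b]})$ holds for all small $\epsilon$ of either sign, and this forces the linear term to vanish, i.e.\ $H=0$.

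Next I would establish the $t^{4/3}$ growth of $I$. Because $H=0$ we have $T=U$ pointwise, so the Lagrange--Jacobi identity $\ddot I=4T-2U$ reduces to $\ddot I=2U$ and integrates to $\dot I(t)-\dot I(t_0)=2\int_{t_0}^t U\,ds=A(x\mid_{[t_0,t]})=\phi(x(t_0),x(t))$. The H\"older regularity of the Ma\~n\'e potential proved in \cite{Mad} yields an estimate of the form $\phi(x(t_0),y)\leq K\,I(y)^{1/4}+K'$ for large $I(y)$ (using $\|y\|\sim I(y)^{1/2}$), which translates into the differential inequality $\dot I(t)\leq C\,I(x(t))^{1/4}+C'$; this integrates to $I(x(t))\leq C_1\,t^{4/3}$. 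For the matching lower bound, the Sundman inequality $U\geq U_0/\sqrt{I}$, with $U_0$ the infimum of $\tilde U=I^{1/2}U$ attained at a minimal configuration, gives $\ddot I\geq 2U_0/\sqrt{I}$; multiplying by $\dot I>0$ and integrating yields $\dot I\geq c\,I^{1/4}$ for large $t$, and hence $I(x(t))\geq c_2\,t^{4/3}$.

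Finally, with $H=0$, fixed centre of mass, and $I(x(t))\sim \alpha\,t^{4/3}$, Pollard's theorem (\cite{Poll}, see also \cite{Saa}) forces $U(x(t))\to 0$ and $T(\dot x(t))\to 0$ as $t\to+\infty$. Since $T$ is a positive definite quadratic form in the velocities, this amounts to every $\dot r_i(t)\to 0$, which is the definition of a completely parabolic motion. The main obstacle is the upper bound on $\phi(x(t_0),\cdot)$ in the second step: this is where the free-time-minimising hypothesis is essentially exploited through the global regularity of the Ma\~n\'e potential from \cite{Mad}. The remaining ingredients are consequences of Lagrange--Jacobi, Sundman, and the classical asymptotic theory of the Newtonian $N$-body problem.
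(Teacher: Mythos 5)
Your proposal is correct in substance and reaches the conclusion by the same overall strategy as the paper (zero energy via the time-reparametrization variation, fixed centre of mass from \cite{Mad2}, a $t^{4/3}$ upper bound on $I$ coming from the free-time minimization, and then Pollard's dichotomy to get $U\to 0$, hence $T=U\to 0$). Where you genuinely diverge is in the key quantitative step, the upper bound $I(t)\leq C\,t^{4/3}$. The paper compares $x\mid_{[t_0,t]}$ with the homothetic test curve $\hat x(s)=\rho(s)a$, shows that $g=\dot I\,I^{-1/4}$ is increasing and bounded, and in doing so must invoke Pollard's lower bound $I\geq\alpha_0(t-t_0)^{4/3}$ to control the term $U(a)\int\rho^{-1}ds$ produced by the test curve. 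You instead use the identity $\dot I(t)-\dot I(t_1)=2\int_{t_1}^tU\,ds=A(x\mid_{[t_1,t]})=\phi(x(t_1),x(t))$ (valid since $h=0$ forces $T=U$ and $\ddot I=2U$) together with the growth estimate $\phi(x_0,y)\leq K\,I(y)^{1/4}+K'$; this is cleaner and bypasses the comparison curve, and it buys you the differential inequality $\dot I\leq CI^{1/4}+C'$ directly. Note, though, that the estimate on $\phi$ is really a consequence of the homogeneity $\phi(\lambda x,\lambda y)=\lambda^{1/2}\phi(x,y)$ (Corollary \ref{phi-homogeneo}), the triangle inequality through the origin, and the boundedness of $\phi$ on the compact sphere $S$ (the constant $\Lambda$ of Proposition \ref{g-bounded}), rather than of H\"older regularity per se. Your lower bound via Sundman's inequality is a correct reproof of Pollard's Theorem 1.2, but it is not actually needed: only the upper bound is used to exclude the second branch of the dichotomy in Theorem \ref{PollardAlternative}, after which the precise asymptotics $U\sim\alpha t^{-2/3}$, $I\sim(9/4)\alpha t^{4/3}$ are an output, not an input (your final paragraph states $I\sim\alpha t^{4/3}$ where you have only proved two-sided bounds, which is harmless here but should be phrased as excluding the second alternative). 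Finally, two small points of hygiene: Marchal only guarantees absence of collisions on the open interval, so integrate the Lagrange--Jacobi identity from some $t_1>t_0$; and to apply Pollard's theorems you must first translate to the internal motion so that the (constant) centre of mass sits at the origin, as the paper does in Corollary \ref{UandI}.
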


The next result is a consequence of
theorem \ref{main} and again of
Marchal's theorem.
From its discovery, Marchal's theorem
was used to prove the existence of
special orbits by variational methods.
Commonly, the technique consists in
minimize the action in some special
class of curves, such as periodic curves
with topological or symmetry constraints,
and then apply the theorem
to prove that the minimizer is a true motion.
Here we will use Marchal's theorem
in the inverse way:

\begin{theorem}\label{nocompleteFTM}
If $\dim E\geq 2$
there are no entire free time minimizers
for the N-body problem in $E$,
that is to say, an entire motion $x:\R\to E^N$
is never a free time minimizer.
\end{theorem}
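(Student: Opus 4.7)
My plan is a proof by contradiction. Suppose that $x:\R\to E^N$ is an entire free time minimizer. The \emph{first step} is to exploit the time-reversal symmetry of the Lagrangian, $L(q,v)=L(q,-v)$: the reversed curve $\check x(t):=x(-t)$ is then also an entire free time minimizer, and applying Theorem~\ref{main} separately to $x|_{[0,+\infty)}$ and to $\check x|_{[0,+\infty)}$ shows that $x$ is completely parabolic in both time directions. In particular $\dot x(t)\to 0$ and $I(x(t))\to+\infty$ as $t\to\pm\infty$, the motion has zero energy, and one inherits the parabolic profile $I(x(t))\sim\alpha_{\pm}|t|^{4/3}$ at each end.

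The \emph{second step} upgrades $x$ to a smooth Newtonian orbit on all of $\R$ via Marchal's theorem. For every compact $[a,b]\subset\R$, the restriction $x|_{[a,b]}$ minimizes on $\calC(x(a),x(b),b-a)$: otherwise some $\eta\in\calC(x(a),x(b),b-a)$ would satisfy $A(\eta)<A(x|_{[a,b]})=\phi(x(a),x(b))$, contradicting the definition of $\phi$. Since $\dim E\geq 2$, Marchal's theorem excludes interior collisions, so $x$ is smooth on $\R$ with fixed centre of mass thanks to \cite{Mad2}; combined with the zero energy, the Lagrange--Jacobi identity yields $\ddot I=2U>0$, so $I$ is strictly convex with a unique minimum at some $t^*\in\R$.

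The \emph{third step}, and the main obstacle, is to produce, for $T$ sufficiently large, an admissible competitor $\gamma_T\in\calC(x(-T),x(T))$ with $A(\gamma_T)<A(x|_{[-T,T]})=\phi(x(-T),x(T))$; this would directly contradict the definition of $\phi$. The natural candidate is a concatenation of two parabolic homothetic arcs by a minimal central configuration joined at an intermediate total collision, whose action is computable from the explicit formula $(4\mu^2/3)\tau^{1/3}$ with $\mu=(9U_0/2)^{1/3}$ and $U_0=\min\{I^{1/2}U\}$. This is where Marchal's theorem is invoked ``in the inverse way'': producing \emph{any} admissible (possibly colliding) path with strictly smaller action suffices to refute the minimality of $x$. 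The delicate part is the asymptotic comparison: one must combine the parabolic profile $I(x(\pm T))\sim\alpha T^{4/3}$, the vanishing $U(x(t))\to 0$ (which forces $x(\pm T)$ onto an asymptotic homothetic ray by a minimal central configuration), and a Sundman-type lower bound $2T\geq\dot I^2/(4I)$ on $A(x|_{[-T,T]})$, in order to check that the competitor's leading $T^{1/3}$ coefficient falls strictly below the corresponding lower bound. The degenerate case where these coefficients could coincide --- namely, $x$ itself being a parabolic homothetic motion --- is excluded by the second step, since such a motion reaches total collision in finite time and is therefore not smooth on all of $\R$.
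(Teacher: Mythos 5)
Your Steps 1 and 2 are fine and agree with what the paper does (time reversal plus Theorem~\ref{main}, Marchal, fixed centre of mass, zero energy). The genuine gap is in Step 3, which is only a sketch and, as described, does not close. First, you assert that $U(x(t))\to 0$ ``forces $x(\pm T)$ onto an asymptotic homothetic ray by a minimal central configuration''; this is not a consequence of anything proved here --- the paper explicitly states that the existence of a limit configuration for a free time minimizer is an open question. Second, and more seriously, the leading-order comparison you propose degenerates to an \emph{equality} precisely in the expected case. By Lagrange--Jacobi and zero energy, $A(x\mid_{[-T,T]})=\int_{-T}^{T}2U\,dt=\dot I(T)-\dot I(-T)\sim \tfrac43(\alpha_+^2+\alpha_-^2)\,T^{1/3}$ when $I(t)\sim\alpha_\pm^2|t|^{4/3}$ at the two ends, while the cost of your collision detour is $\phi(x(-T),0)+\phi(0,x(T))$, which by homogeneity (corollary~\ref{phi-homogeneo}) is $\sim(c_-\alpha_-^{1/2}+c_+\alpha_+^{1/2})T^{1/3}$ with $c_\pm=\phi(u(\mp T),0)$ evaluated on the unit sphere. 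If both ends are asymptotic to the minimal parabolic homothety --- i.e.\ $\alpha_\pm=\mu_0=(9U_0/2)^{1/3}$ and $c_\pm=\tfrac43\mu_0^{3/2}$ --- then both leading coefficients equal $\tfrac83\mu_0^2$ and the strict inequality vanishes at order $T^{1/3}$. Your remark that ``the degenerate case is excluded because $x$ would be a parabolic homothetic motion'' conflates equality of the asymptotic \emph{coefficients} with $x$ being literally the homothetic bounce: a smooth entire zero-energy motion can perfectly well have $\alpha_\pm=\mu_0$ without being homothetic, and this is in fact the behaviour one expects of a free time minimizer. So the competitor argument would require a comparison beyond leading order, which you do not supply.

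The paper's proof avoids any quantitative comparison. It uses only the two-sided growth $I(t)\sim\alpha_\pm^2|t|^{4/3}$ (corollary~\ref{UandI} applied to $x$ and to its time reversal) and the rescaling invariance of free time minimizers (corollary~\ref{inv-rescalingFTM}): the curves $\gamma_k(t)=n_k^{-2/3}x(n_k t)$ on $[-1,1]$ are free time minimizers whose endpoints converge (along a subsequence) to $\alpha_- a$ and $\alpha_+ b$, hence have bounded actions; by Ascoli (proposition~\ref{subsequence}) and lower semicontinuity (lemma~\ref{LowerSC-NBP}) a subsequence converges uniformly to a fixed-time Tonelli minimizer $\gamma$ on $[-1,1]$ with $\gamma(0)=\lim n_k^{-2/3}x(0)=0$, a total collision at an interior time, contradicting Marchal's theorem. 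This is exactly the ``inverse use'' of Marchal you were aiming for, but applied to a blow-down limit of the minimizer itself rather than to an explicit competitor, which is what makes the strictness issue disappear.
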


Another application of theorem \ref{main}
can be obtained by means of
the weak KAM theory.
It was recently established by A.~Venturelli
and the second author in \cite{MadVent} that
for any given configuration $x_0\in E^N$,
and every minimal normalized configuration $c\in E^N$,
there is a completely parabolic motion starting
at $x_0$ and asymptotic to a
parabolic homothetic motion by $c$.
As usual, a motion $x:[t_0,+\infty)\to E^N$
is said to be completely parabolic
in the future when $\lim_{t\to\infty} T(t)=0$.
This is equivalent to say that
all the velocities tend to zero when $t\to\infty$.
We will easily deduce from
theorem \ref{main} that free time minimizers
are completely parabolic.
On the other hand, as we have say,
associated to every critical solution
of the Hamilton-Jacobi equation there
is a lamination of the space of
configurations by calibrating curves
which are therefore free time minimizers
(see \cite{Mad} prop. 15).
Therefore, we obtain an alternative proof for the
abundance of completely parabolic motions:

\begin{theorem}\label{abudance}
Given N different positions
$r_1, r_2,\dots, r_N \in E$,
there exist $N$ velocities
$v_1, v_2,\dots, v_N \in E$
such that the motion determined
by these initial positions
and velocities is completely parabolic.
\end{theorem}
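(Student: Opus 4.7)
The plan is a direct application of the weak KAM machinery sketched in the introduction together with Theorem~\ref{main}. Given distinct positions $r_1,\dots,r_N \in E$, I set $x_0=(r_1,\dots,r_N)\in\Omega$. By the weak KAM theorem for the Newtonian $N$-body problem established in \cite{Mad}, there exists a viscosity solution $u:E^N\to\R$ of the critical Hamilton-Jacobi equation $H(x,d_xu)=0$. The same theory yields, for the prescribed base point $x_0$, a curve $x:[0,+\infty)\to E^N$ with $x(0)=x_0$ that calibrates $u$, that is, $A(x\mid_{[0,t]})=u(x_0)-u(x(t))$ for every $t>0$.

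Next I would verify that $x$ is a free time minimizer on $[0,+\infty)$. Since $u$ is in particular a subsolution, $u(p)-u(q)\leq \phi(p,q)$ for every $p,q\in E^N$; together with the calibration identity this forces $A(x\mid_{[a,b]})=u(x(a))-u(x(b))\leq \phi(x(a),x(b))$, and the reverse inequality is automatic, so equality holds on every compact $[a,b]\subset[0,+\infty)$. Theorem~\ref{main} then immediately asserts that the associated motion of the bodies is completely parabolic.

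It remains to extract the initial velocities. Since $\dim E\geq 2$, Marchal's theorem applied on the open interval $(0,+\infty)$ gives $x(t)\in\Omega$ for all $t>0$. Because $x_0\in\Omega$ and $\Omega$ is open, continuity of $x$ provides an interval $[0,\varepsilon)$ on which the curve stays collisionless, and on that interval $x$ is a genuine $C^2$ Newton trajectory. Writing $x(t)=(r_1(t),\dots,r_N(t))$ and setting $v_i=\dot r_i(0)$, the Cauchy data $(r_i,v_i)_{i=1}^N$ determine a classical solution of Newton's equations that agrees with $x$, and is therefore completely parabolic.

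The step most worth scrutinizing is the regularity at the endpoint $t=0$: one must argue that the calibrating curve given by weak KAM is smooth enough there for the initial velocities to be well defined. This reduces to showing that $x$ stays in $\Omega$ on a right neighborhood of $0$, which follows by combining Marchal's theorem on $(0,+\infty)$ with the openness of $\Omega$ and the hypothesis that the prescribed starting positions are distinct.
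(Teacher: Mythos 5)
Your proposal is correct and follows essentially the same route as the paper: the authors also obtain Theorem~\ref{abudance} by taking a calibrating curve of a weak KAM solution through $x_0$ (via \cite{Mad}, proposition 15), observing that calibration plus the subsolution inequality makes it a free time minimizer on $[0,+\infty)$, and then invoking Theorem~\ref{main}. Your extra care about regularity at $t=0$ (openness of $\Omega$ plus Marchal's theorem, so that $\dot r_i(0)$ is well defined) is a reasonable fleshing-out of a point the paper leaves implicit, not a different argument.
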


There is a subtle difference between the first
proof of this result given in \cite{MadVent}
and the proof given here.
Our proof uses the existence of a
weak KAM solution, and we lose the possibility
of choice for the limit shape of the bodies.
On the other hand, we gain a stronger minimization
property (the parabolic motion is not only
globally minimizing, that is, in every compact
subinterval of his domain, but also in free time).

As in \cite{FaMad}, the existence of weak KAM
solutions for the $N$-body problem
is obtained in \cite{Mad}
by a fixed point argument.
We hope that a more refined study of the
subject can give the existence of particular
weak KAM solutions, in such a way that the
limit shape of his calibrating
curves can be prescribed in advance.
These solutions would be similar
to the Busemann functions of a complete
non compact manifold.

We do not know as yet if there is a limit
configuration for a free time minimizer.
In fact, only we can say that, if a free time
minimizer has an asymptotic configuration in the
sense that the normalized
configuration $u(t)=I(x(t))^{-1/2}x(t)$ converges
to some configuration $a\in E^N$ with $I(a)=1$,
then the limit configuration $a$ must be a
central configuration such that its parabolic
homothetic motion is itself a free time minimizer.
On the other hand, this last property seems to
be the only requirement on the configuration
which is needed to define an associated critical
Busemann function. We refer the reader to the
work of G.~Contreras \cite{Cont} for a
construction of the critical
Busemann functions of an autonomous
Tonelli Lagrangian and his relationship
with the weak KAM theory.
These considerations show that the set of
configurations with such property is
playing the role of the
Aubry set at infinity.

Another interesting invariant set in the
general theory of Tonelli Lagrangians
is the Ma\~n\'e set.
We refer the reader to the original paper
of Ma\~n\'e \cite{Mane}
for the definition of a semistatic curve,
as well as to the work of G.~Contreras
and G.~Paternain \cite{ConPat}.
It is not difficult to see that the semistatic
curves in these cited works are precisely
the free time minimizers in our context
(the critical value is $c(L)=0$).
The Ma\~n\'e set is defined as the subset
of the tangent bundle $TM$ whose
elements are the velocity of some
entire semistatic curve
(the set $\Sigma(L)$ in the cited literature).
Therefore, theorem \ref{nocompleteFTM}
says that the Ma\~n\'e set of the
Newtonian $N$-body problem is empty.

The paper is organized as follows.
The next section is devoted to introduce
the main tools and notations we will use.
In particular, the lower semicontinuity
of the Lagrangian action is showed as well
as the homogeneity of the action potential.
In the third section, the existence of
free time minimizers is proved, and some of its
basic properties are discussed.
The last section begins recalling
a theorem of H.~Pollard
(\cite{Poll} theorem 5.1, p. 607)
and give the proof of theorem \ref{main}
and theorem \ref{nocompleteFTM}.

\section{Preliminaries and notations}

As usual, we will use the notation
$I(t)$ for $I(x(t))$ when the curve
$x(t)$ is understood.
In the same way we will write
$U(t)=U(x(t))$, $T(t)=I(\dot x(t))$.
Therefore, if $x(t)$ describes a motion
of the system, then the quantity
$h=T(t)-U(t)$ is the constant
total energy of the motion,
and the Lagrange-Jacobi
relation (or virial relation) can be
written $\ddot I=2U+4h$.

We will write $x\cdot y$
the mass inner product
of two configurations $x,y\in E^N$,
thus we have $I(x)=x\cdot x$.
It is easy to see that
Newton's equations admit
the synthetic expression
$\ddot x =\nabla U(x)$
if the gradient is taken
with respect to this inner product.
With the obvious identification
$TE^N\simeq E^N\times E^N$
we can write $2T(v)=v\cdot v$.
If we apply the
Cauchy-Schwarz inequality to the product
$x(t)\cdot v(t)$, where $v=\dot x$,
we get the inequality $2IT-\dot I^2\geq0$,
where equality holds if and only if
the velocities vector and the
configuration vector are collinear.
In particular, the equality holds on
an open interval of time if and only
if the curve is homothetic
on this interval.

An excellent presentation of
basic geometric constructions
for $N$-body problems
with homogeneous potentials is
the paper of A.~Chenciner \cite{Chen2},
to which we refer the reader for
other intimately related definitions
and properties.

\subsection{The Lagrangian action in polar coordinates}
\label{polaraction}

When a curve $x:[a,b]\to E^N$ avoid
the total collision, we can decompose
it as the product of a positive real function
by an unitary configuration. In other words,
we can write $x(t)=\rho(t)\,u(t)$,
with $\rho(t)>0$
and $I(u(t))=1$ for all $t\in [a,b]$.
Note that these factors are well defined as
$\rho=I(x)^{1/2}$ and $u=\rho^{-1}x$.

Therefore we have
$\dot x=\dot\rho\,u+\rho\,\dot u$.
Since $u^2=u\cdot u=I(u)$
is constant, we also
have $u\cdot \dot u=0$,
from which we deduce that
$\dot x^2=\dot x\cdot \dot x= \dot\rho^2+
\rho^2\,\dot u^2$.
If in addition we consider the homogeneity
of the Newtonian potential, we have that
$U(x)=U(\rho\,u)=\rho^{-1}U(u)$.

Thus we get the following expression
for the action of the curve $x$, which will
be useful to compare it to other paths
joining the same endpoints.

\begin{equation}
\label{polaraction-expression}
A(x)=\frac{1}{2}\int_a^b\dot\rho(s)^2\,ds+
\frac{1}{2}\int_a^b\rho(s)^2\dot u(s)^2\,ds+
\int_a^b\rho(s)^{-1}U(u(s))\,ds.
\end{equation}

Note that if the curve
$x$ is homothetic, the
second term vanishes.
Thus, in this case, the
action of $x$ can be viewed as
the Lagrangian action of $\rho$
as a curve in $\R^+$ with respect to the
Lagrangian associated to a reduced
Kepler problem (central force)
in this half-line.

\subsection{Lower semicontinuity of the Lagrangian action}

Frequently in the literature,
the curves are considered
in the Sobolev space $H^1$,
but it is not difficult to see
that for this kind of Lagrangian,
absolutely continuous curves
with finite action must have
square-integrable derivative,
in particular they are also
$1/2$-H\"older continuous.
If $x:[a,b]\to E^N$ is an absolutely
continuous curve
such that $A=A(x)<+\infty$,
then obviously we have
\[\int_a^b \abs{\dot x(s)}^2\,ds\leq 2A.\]
On the other hand, it is well know
that for any absolutely continuous
curve, the distance between its
extremities is bounded
by the integral of the norm
of the speed. Hence, given
$a\leq s<t\leq b$,
we can apply the Bunyakovsky
inequality, and we deduce that
\begin{equation}
\label{lowerboundaction}
\abs{x(t)-x(s)}\leq
\int_s^t\abs{\dot x(u)}\,du
\leq (2A)^{1/2}\;\abs{t-s}^{1/2}.
\end{equation}
Here we use the norm in
$E^N$ induced by the mass inner
product, which is denoted $\abs{x}$,
but it is clear that the H\"{o}lder
continuity not depends on the
choice of the norm since they
are all equivalent. Thus by Ascoli's
theorem we obtain
the following proposition.

\begin{proposition}
\label{subsequence}
Let $x_n:[a,b]\to E^N$
be a sequence of absolutely
continuous curves for which
there is a positive
constant $k<+\infty$ such that
$A(x_n)\leq k$ for all $n>0$.
If $x_n(t)$ converges for some
$t\in[a,b]$, then there is
a subsequence $x_{n_k}$
which converges uniformly.
\end{proposition}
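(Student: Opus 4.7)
The plan is to apply the Ascoli--Arzel\`a theorem, for which we need to verify equicontinuity and pointwise boundedness of the family $\{x_n\}$ on $[a,b]$.

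First I would use the hypothesis $A(x_n)\leq k$ together with the estimate (\ref{lowerboundaction}) already established in the text. This immediately yields the uniform H\"older bound
\[\abs{x_n(t)-x_n(s)}\leq (2k)^{1/2}\,\abs{t-s}^{1/2}\]
for every $n$ and every pair $s,t\in[a,b]$. In particular, the family $\{x_n\}$ is uniformly equicontinuous (indeed, uniformly $1/2$-H\"older).

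Next I would upgrade pointwise convergence at one instant $t_0\in[a,b]$ to pointwise boundedness everywhere. Since $x_n(t_0)$ converges, there exists $M<+\infty$ with $\abs{x_n(t_0)}\leq M$ for all $n$. Combining this with the H\"older estimate evaluated at $s=t_0$ gives
\[\abs{x_n(t)}\leq M+(2k)^{1/2}\abs{b-a}^{1/2}\]
for every $t\in[a,b]$ and every $n$, so the family is uniformly bounded on $[a,b]$.

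At this point Ascoli--Arzel\`a provides a subsequence $x_{n_k}$ that converges uniformly on $[a,b]$ to a continuous limit. No serious obstacle arises: the only point requiring a moment of care is the reliance on $E^N$ being finite-dimensional (so that bounded sets are relatively compact), which is part of the standing hypothesis on $E$. All the analytic content is already encoded in inequality (\ref{lowerboundaction}), so the proposition is essentially a packaging of that estimate with the classical compactness theorem.
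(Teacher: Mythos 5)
Your argument is correct and is precisely the one the paper intends: the uniform $1/2$-H\"older bound from inequality (\ref{lowerboundaction}) gives equicontinuity, convergence at one point plus that bound gives uniform boundedness, and Ascoli--Arzel\`a (using finite-dimensionality of $E^N$) yields the uniformly convergent subsequence. No difference from the paper's approach worth noting.
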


In other words,
given $x,y\in E^N$,
$\tau>0$ and $k>0$,
we know that the sets
of absolutely continuous curves
\[\Sigma(x,y,\tau,k)=
\set{\gamma:[0,\tau]\to E^N
\mid \gamma(0)=x,\;\gamma(\tau)=y,
\textrm{ and } A(\gamma)\leq k}\]
are relatively compact
in $C^0([a,b],E^N)$.
As we will see,
the compactness of
such sets is equivalent
to the lower semicontinuity
of the Lagrangian action on
the subset of absolutely
continuous curves.
Moreover, we will see
that it is a consequence
of the well known Tonelli's
lemma for strictly convex and
superlinear Lagrangians
that we state below.

We recall that an autonomous
\textsl{Tonelli Lagrangian}
on a complete
Riemannian manifold $M$ is a
function $L:TM\to\R$
of class $C^2$, which is
strictly convex on
each fiber of $TM$,
and such that for each positive
constant $\alpha>0$ there is
$C_\alpha\in\R$ such that
$L(x,v)\geq
\alpha\,\norm{v}+C_\alpha$
for all $(x,v)\in TM$.
We will denote $A_L(x)$
the corresponding Lagrangian
action of an absolutely
continuous curve $x$ in $M$.

\begin{lemma}
[Tonelli's lower semicontinuity]
Let $M$ be a
Riemannian manifold,
and let $L:TM\to\R$ be a
Tonelli Lagrangian on $M$.
Suppose that $x_n:[a,b]\to M$
is a sequence of absolutely
continuous curves such that
$\sup A_L(x_n)<+\infty$. If
$x_n$ converges uniformly to
a curve $x$,
then the limit curve $x$ is
absolutely continuous, and
$A_L(x)\leq \liminf A_L(x_n)$.
\end{lemma}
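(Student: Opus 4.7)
The plan is to exploit the two defining features of a Tonelli Lagrangian separately: superlinearity for absolute continuity and weak $L^1$ compactness of the derivatives, and convexity in the velocity for the lower semicontinuity inequality. After passing to a subsequence realizing $\liminf A_L(x_n)$, it suffices to prove the inequality along that subsequence, so I will do so.

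For the first part, the superlinear bound $L(y,v)\geq\alpha\,\norm{v}+C_\alpha$, with $\alpha$ chosen arbitrarily large, shows that $\{\dot x_n\}$ is both $L^1$-bounded and equi-integrable on $[a,b]$. The Dunford--Pettis theorem then yields a subsequence $\dot x_{n_k}$ that converges weakly in $L^1$ (read in local coordinate charts around the compact image of $x$) to some vector field $w$. Testing this weak convergence against $\mathbf{1}_{[a,t]}\in L^\infty$ in the identity $x_{n_k}(t)-x_{n_k}(a)=\int_a^t\dot x_{n_k}(s)\,ds$, and using the uniform convergence on the left, gives $x(t)-x(a)=\int_a^t w(s)\,ds$, so $x$ is absolutely continuous with $\dot x=w$ almost everywhere.

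For the lower semicontinuity, working in a coordinate chart the convexity of $L$ in $v$ combined with superlinearity gives the Legendre--Fenchel representation $L(y,v)=\sup_p\bigl\{\,p\cdot v-H(y,p)\,\bigr\}$, where $H$ is the Hamiltonian dual to $L$. For any smooth test momentum $p$,
\[
\int_a^b\bigl(p(t)\cdot\dot x_n(t)-H(x_n(t),p(t))\bigr)\,dt\leq A_L(x_n),
\]
and the left-hand side passes to the limit because the bounded $p$ pairs with the weak $L^1$ convergence of $\dot x_n$, while $H(x_n,p)\to H(x,p)$ uniformly by continuity of $H$. Taking $\liminf$ and then the supremum over $p$ gives $A_L(x)\leq\liminf A_L(x_n)$, as desired. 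The main obstacle is precisely the asymmetry of the two convergences --- positions uniformly, velocities only weakly in $L^1$ --- and bridging it requires convexity of $L$ in $v$; the duality argument above encapsulates this, as does the equivalent route via Mazur's lemma, which upgrades weak convergence of $\dot x_{n_k}$ to strong $L^1$ convergence of convex combinations and then applies convexity pointwise.
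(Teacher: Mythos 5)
The paper does not actually prove this lemma: it states it and refers the reader to the first appendix of Mather's paper \cite{Mat}, so you are supplying an argument where the authors supply a citation. Your argument is the classical direct-method proof and it is correct in outline: superlinearity with $\alpha$ arbitrarily large does give both the $L^1$ bound and the equi-integrability of $\{\dot x_n\}$ (since $\int_S\norm{\dot x_n}\leq\alpha^{-1}\bigl(\int_S L\,dt-C_\alpha\abs{S}\bigr)$ and $\int_S L$ is controlled by the action bound and the uniform lower bound on $L$), Dunford--Pettis then yields the weak $L^1$ limit $w$, and testing against $\mathbf{1}_{[a,t]}$ identifies $w=\dot x$ and proves absolute continuity of $x$; the Fenchel--Young inequality together with weak convergence of velocities and uniform convergence of positions gives the lower bound for each fixed momentum field. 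Two points deserve slightly more care than you give them. First, the final interchange $\sup_p\int\bigl(p\cdot\dot x-H(x,p)\bigr)\,dt=\int\sup_p\bigl(p\cdot\dot x-H(x,p)\bigr)\,dt=A_L(x)$ is not automatic for smooth $p$ alone: the standard fix is to allow bounded \emph{measurable} $p$ (for which the limit passage still works, since weak $L^1$ convergence pairs with $L^\infty$ and $H(x_n,p)\to H(x,p)$ with domination on a compact set) and to build $p$ by a measurable selection over a countable dense family of constant momenta, letting monotone convergence finish the job. Second, the chart-by-chart reading of $\dot x_n$ on a general Riemannian manifold requires subdividing $[a,b]$ so that each piece of the limit curve, hence of $x_n$ for large $n$, lies in one chart; this is routine, and in the only case the paper uses ($M=E^N$, and the quadratic Lagrangian $L_0$ in Lemma \ref{LowerSC-NBP}) it is vacuous. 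Your alternative route via Mazur's lemma is also standard, but note it needs an extra step to handle the varying base point, i.e.\ to compare $L(x_n(t),\cdot)$ with $L(x(t),\cdot)$, which the duality argument handles more cleanly.
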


There are several proofs
in the literature of the
above lemma,
see for instance the first
appendix in \cite{Mat}, where
an equivalent version is given.
We can deduce from this lemma
the following theorem,
also due to Tonelli,
which assures the existence of
absolutely continuous minimizers.

\begin{theorem}[Tonelli]
Let $M$ be a complete connected
Riemannian manifold,
and let $L:TM\to\R$ be an
autonomous
Tonelli Lagrangian on $M$.
Given $x,y\in M$ and $\tau>0$,
the action $A_L$ takes a minimum
value over the set of all absolutely
continuous curves
$\gamma:[0,\tau]\to M$
such that $\gamma(0)=x$
and $\gamma(\tau)=y$.
\end{theorem}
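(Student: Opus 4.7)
The plan is the classical direct method of the calculus of variations. Let $\calA$ denote the set of admissible curves (absolutely continuous $\gamma:[0,\tau]\to M$ with $\gamma(0)=x$ and $\gamma(\tau)=y$) and set $m=\inf_{\gamma\in\calA} A_L(\gamma)$. I would first check $m<+\infty$: since $M$ is connected one can join $x$ and $y$ by a piecewise smooth path reparametrized on $[0,\tau]$, and the continuous Lagrangian takes finite values on the compact image of its tangent lift, so this path has finite action. Then pick a minimizing sequence $(\gamma_n)\subset\calA$ with $A_L(\gamma_n)\to m$, and fix $K$ with $A_L(\gamma_n)\leq K$ for all $n$.

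The core of the argument is to extract a uniformly convergent subsequence. From the superlinearity bound with $\alpha=1$, $\int_0^\tau \norm{\dot\gamma_n}\,dt \leq K - C_1\tau$, giving a uniform $L^1$ bound on the speeds. This immediately bounds $d(x,\gamma_n(t))$ uniformly in $n$ and $t$, and by the Hopf--Rinow theorem (which uses completeness of $M$) all the $\gamma_n$ take values in a fixed compact subset $\Omega\subset M$. To upgrade the $L^1$ bound into equi-absolute-continuity, I would apply superlinearity with arbitrarily large $\alpha$: integrating over a measurable $E\subset[0,\tau]$ yields a bound on $\int_E \norm{\dot\gamma_n}\,dt$ that can be made small uniformly in $n$ by first choosing $\alpha$ large (to kill the bulk term $K/\alpha$) and then $|E|$ small (to kill the correction involving $C_\alpha$). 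Hence $(\gamma_n)$ is equicontinuous, and by Ascoli--Arzel\`a some subsequence $\gamma_{n_k}$ converges uniformly to a continuous $\gamma:[0,\tau]\to M$ with $\gamma(0)=x$ and $\gamma(\tau)=y$.

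To close, Tonelli's lower semicontinuity lemma applies: since $\sup_k A_L(\gamma_{n_k})\leq K$ and $\gamma_{n_k}\to\gamma$ uniformly, the limit $\gamma$ is absolutely continuous and $A_L(\gamma)\leq \liminf_k A_L(\gamma_{n_k})=m$. Hence $\gamma\in\calA$ and $A_L(\gamma)=m$, so the infimum is attained.

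The main obstacle I anticipate is the equi-absolute-continuity step: passing from a uniform $L^1$ bound on the speeds to genuine equicontinuity of the curves. The bare $L^1$ bound only yields boundedness of arc lengths, not a uniform modulus of continuity. Here superlinearity plays the same role as the Bunyakovsky/Cauchy--Schwarz inequality did for the $N$-body Lagrangian in Proposition \ref{subsequence}, and it is what makes the minimizing sequence precompact in $C^0$ so that the semicontinuity lemma can close the argument.
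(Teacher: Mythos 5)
Your proposal is correct and follows essentially the same route as the paper: the direct method, with $C^0$-precompactness of action sublevel sets obtained from superlinearity, completeness and Ascoli--Arzel\`a, closed by Tonelli's lower semicontinuity lemma (the paper phrases the extraction via nested compact sublevel sets $\Sigma_k$ rather than an explicit minimizing sequence, which is only a cosmetic difference). If anything, your equi-absolute-continuity step using superlinearity with arbitrarily large $\alpha$ is more careful than the paper's appeal to ``$1/2$-H\"older continuity as before,'' since that earlier argument relied on the quadratic kinetic term of the $N$-body Lagrangian rather than on superlinearity alone.
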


\begin{proof}
For each $k\in\R$,
let $\Sigma_k$ be the
set of absolutely continuous
curves $\gamma:[0,\tau]\to M$
such that $A_L(\gamma)\leq k$.
Since $M$ is connected,
the sets $\Sigma_k$ are
nonempty for sufficiently large
values of $k$.
From the superlinearity of $L$
we can deduce as before,
that each curve in $\Sigma_k$
is $1/2$-H\"{o}lder continuous,
with a H\"{o}lder constant which
only depends in $k$.
Therefore, since $M$ is complete,
we can apply Ascoli's theorem as
in proposition \ref{subsequence},
and we get that the sets
$\Sigma_k$ are relatively compact
in the $C^0$-topology.
But if we apply Tonelli's
lemma to a convergent sequence
in some $\Sigma_k$ we conclude
that the limit curve is also in
$\Sigma_k$. Thus, each
$\Sigma_k$ is actually compact
in the $C^0$-topology.

We note now that the
Lagrangian is bounded below,
since the superlinearity implies
that $A_L(\gamma)\geq \tau\,C_1$.
Thus $k_0= \inf\set{k\in\R
\mid\Sigma_k\neq\emptyset}$
is well defined. Since
$\Sigma_{k_0}=\cap_{k>k_0}\Sigma_k$
we can conclude that
$\Sigma_{k_0}\neq\emptyset$.
But it is clear that each
$\gamma\in\Sigma_{k_0}$ is a
minimizer of $A_L$ in
the required set of curves.
\end{proof}

Using Fatou's lemma we can obtain
a Tonelli's theorem which works
for the Lagrangian action of the
Newtonian $N$-body problem.
As before, first we need
to establish the lower
semicontinuity of the action.

\begin{lemma}
\label{LowerSC-NBP}
Let $x_n:[a,b]\to E^N$ be a
sequence of absolutely continuous
curves which converges
uniformly to a limit curve $x$,
and such that $\sup A(x_n)<+\infty$.
Then, $x$
is also an absolutely continuous
curve, and $A(x)\leq \liminf A(x_n)$.
\end{lemma}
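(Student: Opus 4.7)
The plan is to split the Lagrangian into its two pieces $L(x,v) = T(v) + U(x)$ and handle them separately: the kinetic part via the previously stated Tonelli lemma, and the potential part via Fatou's lemma using lower semicontinuity of $U$.

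First I would write $A(x_n) = A_T(x_n) + A_U(x_n)$ where $A_T(\gamma) = \int_a^b T(\dot\gamma)\,dt$ and $A_U(\gamma) = \int_a^b U(\gamma)\,dt$. Since $U \geq 0$, we have $A_T(x_n) \leq A(x_n)$, so $\sup A_T(x_n) < +\infty$. The kinetic Lagrangian $T(v) = \frac{1}{2}\sum m_i\|v_i\|^2$ is a finite-valued, strictly convex, superlinear (quadratic) Tonelli Lagrangian on the Euclidean manifold $E^N$. Hence the previous Tonelli lower semicontinuity lemma applies to the sequence $x_n \to x$: the limit curve $x$ is absolutely continuous, and
\[ A_T(x) \leq \liminf_{n\to\infty} A_T(x_n). \]

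Next I would handle $A_U$. The Newtonian potential $U:E^N \to (0,+\infty]$ is continuous on the collision-free set $\Omega$ and tends to $+\infty$ at any collision configuration (since some $\|r_{ij}\|^{-1}$ blows up), hence $U$ is lower semicontinuous on all of $E^N$. Since $x_n \to x$ uniformly on $[a,b]$, we have pointwise convergence $x_n(t) \to x(t)$ for every $t$, so by lower semicontinuity
\[ U(x(t)) \leq \liminf_{n\to\infty} U(x_n(t)) \quad \text{for every } t\in [a,b]. \]
Because $U \geq 0$, Fatou's lemma yields
\[ A_U(x) = \int_a^b U(x(t))\,dt \leq \int_a^b \liminf_{n\to\infty} U(x_n(t))\,dt \leq \liminf_{n\to\infty} A_U(x_n). \]

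Finally, adding the two inequalities and using the elementary fact that $\liminf a_n + \liminf b_n \leq \liminf(a_n + b_n)$ gives
\[ A(x) = A_T(x) + A_U(x) \leq \liminf_{n\to\infty} A_T(x_n) + \liminf_{n\to\infty} A_U(x_n) \leq \liminf_{n\to\infty} A(x_n). \]

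The main (minor) obstacle is really conceptual rather than technical: one must observe that although $U$ takes values in $(0,+\infty]$ and the previously invoked Tonelli lemma is stated for finite-valued Lagrangians, the decomposition bypasses this, because the kinetic part alone is a genuine Tonelli Lagrangian and the potential part is handled by Fatou thanks to its non-negativity and lower semicontinuity. Absolute continuity of $x$ comes for free from the kinetic step, since finite kinetic action and uniform convergence already force $x$ to be $1/2$-Hölder continuous with square-integrable derivative, exactly as in the estimate \eqref{lowerboundaction}.
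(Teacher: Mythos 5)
Your proof is correct and follows essentially the same route as the paper: split the action into kinetic and potential parts, apply the Tonelli lower semicontinuity lemma to the quadratic kinetic Lagrangian, and handle the potential term with Fatou's lemma using positivity of $U$. The only cosmetic difference is that you invoke lower semicontinuity of $U$ where the paper asserts pointwise equality $U(x(s))=\liminf U(x_n(s))$; both suffice for Fatou.
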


\begin{proof}
Let $L_0$ be the quadratic
Lagrangian in $E^N$ given by
\[L_0(x,v)=\frac{1}{2}\abs{v}^2\]
and let $A_0$ be the associated action.
Certainly $L_0$ is a
Tonelli Lagrangian on $E^N$,
therefore Tonelli's lemma can be
applied.
Thus we have that $x$ is
absolutely continuous, and that
\[A_0(x)\leq \liminf A_0(x_n).\]
On the other hand, since $U$
is a positive measurable function,
by Fatou's lemma we have
\[\int_a^b U(x(s))\,ds=
\int_a^b \liminf U(x_n(s))\,ds\leq
\liminf \int_a^b U(x_n(s))\,ds.\]
Since
\[A(x)=A_0(x)+\int_a^b U(x(s))\,ds\]
we get $A(x)\leq \liminf A(x_n)$
what was required to be proved.
\end{proof}

An evident corollary of lemma \ref{LowerSC-NBP}
is the existence
of absolutely continuous minimizers
on each set of
curves $\calC (x,y,\tau)$.

\begin{theorem}[Tonelli's theorem for
the $N$-body problem]
\label{TonelliNBP}
Given two configurations
$x,y\in E^N$ and $\tau > 0$,
there is at least one curve
$\gamma\in\calC (x,y,\tau)$ such that
$A(\gamma)=\phi(x,y,\tau)$.
\end{theorem}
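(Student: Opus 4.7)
The plan is to mimic the proof of the classical Tonelli theorem given just above, substituting Tonelli's lemma by its Newtonian counterpart, Lemma \ref{LowerSC-NBP}.

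The first step is to check that $\phi(x,y,\tau)<+\infty$, so that the set over which the infimum is taken contains curves of finite action. This is the only point where one must depart from the classical template, because of the singularities of $U$. For configurations connected by a generic affine segment, the kinetic part of the action is trivially integrable, and $\|r_{ij}\|^{-1}$ is integrable in time along a path crossing each collision subspace $\{r_i=r_j\}$ transversally. In the remaining cases (for example when $x$ or $y$ itself is a collision) one perturbs the segment into a piecewise affine curve whose interior misses the collision set, and whose potential has at worst integrable singularities concentrated at the two endpoints.

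Once finiteness is granted, I would choose a minimizing sequence $\gamma_n\in\calC(x,y,\tau)$ with $A(\gamma_n)\to\phi(x,y,\tau)$; then $\gamma_n\in\Sigma(x,y,\tau,k)$ for some common $k>0$ and all large $n$. Since the curves share the initial value $\gamma_n(0)=x$, Proposition \ref{subsequence} extracts a subsequence converging uniformly on $[0,\tau]$ to a continuous limit $\gamma:[0,\tau]\to E^N$. Uniform convergence preserves the boundary values, so $\gamma(0)=x$ and $\gamma(\tau)=y$. Applying Lemma \ref{LowerSC-NBP} to this convergent subsequence, the limit $\gamma$ is absolutely continuous and
\[
A(\gamma)\;\leq\;\liminf_{n\to\infty}A(\gamma_n)\;=\;\phi(x,y,\tau).
\]
Since $\gamma\in\calC(x,y,\tau)$, the reverse inequality holds by definition of the infimum, and equality yields the required minimizer.

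The only genuinely technical step is verifying that $\phi(x,y,\tau)$ is finite, because the singular Newtonian potential obstructs a direct appeal to the classical Tonelli framework (which assumes a smooth Lagrangian bounded below by a superlinear function on all of $TM$). Once this is disposed of, the rest is a formal transcription of the Tonelli scheme already laid out for genuine Tonelli Lagrangians.
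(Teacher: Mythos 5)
Your overall scheme is the same as the paper's: both arguments rest on Proposition \ref{subsequence} (Ascoli compactness of the sublevel sets $\Sigma(x,y,\tau,k)$) together with Lemma \ref{LowerSC-NBP} (lower semicontinuity), and whether one phrases the conclusion via a minimizing sequence or via a nested intersection of nonempty compact sets is immaterial. The one ingredient you add, and explicitly single out as the only technical point, is the finiteness of $\phi(x,y,\tau)$ --- and there your justification fails. Along an affine segment crossing a collision subspace $\{r_i=r_j\}$ transversally at a time $t^*$ one has $\norm{r_{ij}(t)}\sim c\,\abs{t-t^*}$, hence $U(\gamma(t))\geq m_i m_j\norm{r_{ij}(t)}^{-1}\sim C\,\abs{t-t^*}^{-1}$, whose time integral diverges logarithmically; it is \emph{not} integrable. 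The same problem occurs at an endpoint which is itself a collision configuration: an affine departure from $\{r_i=r_j\}$ gives $\norm{r_{ij}(t)}\sim ct$ near $t=0$ and again a divergent integral. So the piecewise affine test curves you describe have infinite action whenever they meet the collision set $E^N\setminus\Omega$, and your finiteness argument does not close.

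The claim is of course true, but the construction must be different. If $\dim E\geq 2$ and $x,y\in\Omega$, a generic segment from $x$ to $y$ misses the collision set entirely (it has codimension $\dim E\geq 2$), $U$ is bounded along it, and the action is finite with no singular analysis needed. When an endpoint is a collision configuration, or when $\dim E=1$ and the collision hyperplanes separate $x$ from $y$, one must approach or cross the collision set with the Keplerian exponent $2/3$: arranging $\norm{r_{ij}(t)}\sim c\,\abs{t-t^*}^{2/3}$ yields $U\sim\abs{t-t^*}^{-2/3}$ (integrable) and $\abs{\dot\gamma}\sim\abs{t-t^*}^{-1/3}$ (square integrable), hence finite action. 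The paper itself does not reprove finiteness inside this theorem; it simply asserts that the sets $\Sigma(x,y,\tau,c)$ are nonempty for large $c$, this being established in \cite{Mad} (compare the bound $\phi(x,x,\tau)\leq\mu\tau^{1/3}$ quoted from there). With the finiteness step repaired along these lines, the rest of your argument is correct and coincides with the paper's.
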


\begin{proof}
By proposition \ref{subsequence}
and lemma \ref{LowerSC-NBP}, we
already know that given $c\in\R$,
the sets of
absolutely continuous curves
\[\Sigma(x,y,\tau,c)=
\set{\gamma:[0,\tau]\to E^N
\mid \gamma(0)=x,\gamma(\tau)=y,
\;\textrm{and }A(x)\leq c}\]
are compact subsets of
$C^0([0,\tau],E^N)$.
Moreover, since $L>0$
and $E^N$ is connected,
they are empty for
$c\leq 0$, and nonempty for
sufficiently large values
of $c>0$.

We observe now that
$\phi(x,y,\tau)=
\inf\set{c>0\mid
\Sigma(x,y,\tau,c)\neq\emptyset}$.
Hence the intersection
for $c>\phi(x,y,\tau)$
of these nonempty compact
sets is also nonempty, and each
curve $\gamma$ in the
intersection satisfies
$A(\gamma)=\phi(x,y,\tau)$.
\end{proof}

\subsection{Regularity of minimizers and Marchal's theorem}

Everything what we said would
not be useful for anything,
unless we are able to show that
absolutely continuous minimizers
correspond to true motions or,
in other words, to solutions of
Newton's equation. We
will explain why this happens
briefly.

For a Tonelli Lagrangian
on a smooth manifold, it is
very well known that a $C^1$
minimizer is in fact of class
$C^2$ and satisfies the
Euler-Lagrange equations.
Therefore it suffices to show
that an absolute continuous
minimizer is of class $C^1$.
The proof of this fact,
which actually holds even for
time dependent Tonelli Lagrangians
(see \cite{Mat}) is a little
arduous, but for a mechanical
system is not it as much.
We must recall first a
classical result of the
calculus of variations,
which tell us that if
$\gamma\in\calC(x,y,\tau)$
is a critical point of the
Lagrangian action,
then in local coordinates
we can write
\[\Leg(\gamma(t),\dot\gamma(t))=
\left(\gamma(t),\,u+\int_0^t
\frac{\partial L}{\partial x}
(\gamma(s),\dot\gamma(s))\,ds\,\right)\]
where $\Leg:TM\to T^*M$ is the
Legendre transform, $u\in(\R^n)^*$,
and the equality holds for
almost every $t\in [0,\tau]$.

Note that for a mechanical system,
i.e. of the form $L(x,v)=g(v,v)+U(x)$,
where $g$ is a Riemannian metric,
and $V$ a smooth function en $M$,
the right hand of this equality
is a continuous function of $t$,
since (in local coordinates) we have
\[\frac{\partial L}{\partial x}
(\gamma(s),\dot\gamma(s))=
DU(\gamma(s))\]
for all $s\in[0,\tau]$.
Since $\Leg$ is a diffeomorphism
of class $C^1$, we conclude that
$\dot\gamma$ is actually
a continuous function.
We refer the reader to \cite{AbbonSch}
(proposition 3.1) for a more detailed
explanation and other basic properties
of the Lagrangian action.

Finally, we observe that our Lagrangian is
a smooth mechanical system in $\Omega$,
the open and dense
subset of $E^N$ where $U<+\infty$.
Since the above considerations are
of a local nature, we conclude that an
absolutely continuous minimizer whose
image is contained in
$\Omega$ must be smooth.

Tonelli's theorem results extremely useful
combined with Marchal's theorem that we
recall now. From such combination
and the above considerations
we can conclude that, except in the
collinear case ($\dim E=1$), Tonelli minimizers
are smooth in the interior of its domain.

\begin{theorem}
[Marchal \cite{Mar}, Chenciner \cite{Chen},
Ferrario-Terracini \cite{FerTer}]
Suppose $\dim E\geq 2$.
If $\gamma:[a,b]\to E^N$
is such that
$A(\gamma)=\phi(\gamma(a),\gamma(b),b-a)$,
then $\gamma(t)\in \Omega$ for all $t\in (a,b)$.
\end{theorem}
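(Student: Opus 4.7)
The plan is to argue by contradiction, following C.~Marchal's original averaging trick. Assume that $\gamma$ minimizes the action in $\calC(\gamma(a),\gamma(b),b-a)$ and yet has a collision at some interior time $t_0\in(a,b)$; the goal is to exhibit a competitor $\tilde\gamma\in\calC(\gamma(a),\gamma(b),b-a)$ with strictly smaller action, contradicting minimality. The variation is local, supported in a small window $[t_0-\epsilon,\,t_0+\epsilon]$; outside this window $\tilde\gamma$ coincides with $\gamma$, so only the restrictions to the window need to be compared.

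First I would apply the Sundman--Sperling asymptotic estimates: the subset $C\subset\set{1,\dots,N}$ of indices that meet at $t_0$ shares a common limit position $r^*\in E$, and along the approach the mutual distances $\norm{r_{ij}}$ for $i,j\in C$ decay like $\abs{t-t_0}^{2/3}$. I fix $\epsilon>0$ small enough that $C$ is the only cluster colliding in $[t_0-\epsilon,t_0+\epsilon]$ and that the non-colliding bodies stay at positive distance from $r^*$ throughout the window. I then pick any $k\in C$, let $\phi:[-\epsilon,\epsilon]\to[0,1]$ be a smooth bump with $\phi(\pm\epsilon)=0$ and $\phi(0)=1$, and for each $w\in E$ with $\norm{w}=\delta$ define the perturbed curve $\gamma_w$ by adding $\phi(t-t_0)\,w$ to the single trajectory $r_k(t)$, leaving the others unchanged. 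The endpoints $\gamma(t_0\pm\epsilon)$ and the time span are preserved, so $\gamma_w\in\calC(\gamma(a),\gamma(b),b-a)$.

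The crucial step is the spherical average of $A(\gamma_w)-A(\gamma)$ over $w\in S_\delta:=\set{w\in E\mid\norm{w}=\delta}$. The kinetic increment is bounded by $C\delta^2/\epsilon$, uniformly in the direction of $w$ (the cross terms of the form $\dot r_k\cdot\dot\phi\,w$ are linear in $w$ and vanish after averaging). The potential change touches only interactions involving body $k$, and the dangerous terms are those with $j\in C\setminus\set{k}$; for each of them the Newton shell estimate
\[
\frac{1}{\abs{S_\delta}}\int_{S_\delta}\frac{d\sigma(w)}{\norm{r+w}}\;\leq\;\frac{1}{\max(\norm{r},\delta)},
\]
valid in any Euclidean space of dimension $\geq 2$, drops the averaged integrand from $1/\norm{r_{kj}(t)}$ to $1/\delta$ whenever $\norm{r_{kj}(t)}<\delta$, without worsening it anywhere. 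Using $\norm{r_{kj}(t)}\sim\abs{t-t_0}^{2/3}$, a direct integration over the collision subinterval $\abs{t-t_0}<\delta^{3/2}$ yields an averaged potential gain of order $\delta^{1/2}$. Fixing $\epsilon$ and letting $\delta\to 0$, the gain $\delta^{1/2}$ eventually dominates the kinetic cost $\delta^2/\epsilon$, so the averaged variation of the action is strictly negative. Therefore at least one $w\in S_\delta$ realizes a strict decrease $A(\gamma_w)<A(\gamma)$, the desired contradiction.

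The main obstacle is the case of a cluster collision with $\abs{C}\geq 3$: perturbing only body $k$ regularizes the pairs $\set{k,j}$ for $j\in C$, but the internal distances among $C\setminus\set{k}$ still tend to zero, so one must separately verify that those pair-potentials remain integrable over the window (which follows from Sundman--Sperling) and that the gain from the regularized pairs genuinely exceeds the kinetic cost. The refined treatments of Chenciner and of Ferrario--Terracini use the hierarchical cluster structure furnished by Sundman--Sperling and average coherently within each maximal subcluster. The hypothesis $\dim E\geq 2$ enters precisely in the averaging inequality above: in dimension one there is no sphere on which to average around a singularity, and collinear minimizers can indeed have interior collisions.
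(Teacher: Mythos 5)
The paper does not prove this theorem: it is quoted from Marchal, Chenciner and Ferrario--Terracini, so your sketch has to be measured against the published proofs rather than against anything in this text. Your skeleton is the correct one --- localize at an interior collision time $t_0$, invoke the Sundman--Sperling asymptotics $\norm{r_{ij}(t)}\sim\abs{t-t_0}^{2/3}$ inside the colliding cluster, perturb a single colliding body by $\phi(t-t_0)\,w$, and average the change of action over the sphere $\norm{w}=\delta$ so that the kinetic cost is $O(\delta^2/\epsilon)$ while the averaged potential gain on the subinterval $\abs{t-t_0}<\delta^{3/2}$ is of order $\delta^{1/2}$ --- and the orders of magnitude are right.

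The genuine gap is the inequality you call the ``Newton shell estimate''. The mean value of $\norm{r+w}^{-1}$ over $\norm{w}=\delta$ equals $\max(\norm{r},\delta)^{-1}$ in dimension $3$, where $x\mapsto\norm{x}^{-1}$ is harmonic away from the origin, and the inequality $\leq$ survives in dimension $\geq 4$ by superharmonicity. But in dimension $2$ one has $\Delta(\norm{x}^{-1})=\norm{x}^{-3}>0$, so $\norm{x}^{-1}$ is \emph{subharmonic} off the origin: the average over a circle of radius $\delta$ centred at $r$ with $\norm{r}>\delta$ is strictly \emph{larger} than $\norm{r}^{-1}$, and it even diverges logarithmically as $\norm{r}\to\delta$. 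So the averaged variation does not ``not worsen anywhere''; outside the near-collision subinterval it produces a genuine loss, and showing that the time-integrated loss is still dominated by the gain is exactly the hard point of the planar case --- the reason Ferrario and Terracini's explicit computation was needed after Marchal's three-dimensional argument. As written, your argument proves the theorem only for $\dim E\geq 3$. A secondary omission: the Sundman--Sperling asymptotics describe genuine solutions near an isolated collision, whereas a priori the collision set of a fixed-time minimizer is only a closed null set; the published proofs insert a blow-up/rescaling step (or an isolation argument) before performing the averaged variation, and that step is absent from your sketch.
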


Combining Marchal's theorem with \ref{TonelliNBP}
and the above considerations we obtain the
following corollary.

\begin{corollary} If $\dim E\geq 2$, then
for every pair of configurations
$x,y\in E^N$, and every positive time $\tau > 0$,
there is at least one curve
$\gamma\in\calC (x,y,\tau)$ such that
\[A(\gamma)=\phi(x,y,\tau)\,\] and such that
\[\gamma(t)\in\Omega \textrm{ for every }
t\in (0,\tau)\,.\]

In particular the restriction of $\gamma$ to
$(0,\tau)$ satisfies Newton's equations, that is
to say, it is a true motion of the
$N$-body problem.
\end{corollary}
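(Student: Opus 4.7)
The corollary is a straightforward synthesis of three ingredients already laid out in the preliminaries, so my plan is simply to glue them together in the correct order.

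First, I would invoke Theorem \ref{TonelliNBP} directly: given $x,y\in E^N$ and $\tau>0$, it produces at least one absolutely continuous curve $\gamma\in\calC(x,y,\tau)$ with $A(\gamma)=\phi(x,y,\tau)$. This settles the existence and the minimality statement $A(\gamma)=\phi(x,y,\tau)$ independently of the dimension of $E$.

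Next, under the standing hypothesis $\dim E\geq 2$, I would apply Marchal's theorem to this very $\gamma$. Since $\gamma$ realizes $\phi(\gamma(a),\gamma(b),b-a)$ on $[a,b]=[0,\tau]$, the conclusion of Marchal's theorem is precisely that $\gamma(t)\in\Omega$ for all $t\in(0,\tau)$; so the minimizer is collision-free at every interior time. This gives the second bullet of the statement.

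Finally, for the claim that $\gamma|_{(0,\tau)}$ is a true motion, I would appeal to the regularity discussion given just before the statement of Marchal's theorem in the excerpt. On the open and dense set $\Omega$, the Lagrangian $L=T+U$ is a smooth mechanical system (in the sense $L(x,v)=\tfrac12|v|^2+U(x)$ with $U\in C^\infty(\Omega)$), so locally around any interior point the classical variational identity with the Legendre transform forces $\dot\gamma$ to be continuous, hence $\gamma\in C^1$, and then the bootstrap through the Euler--Lagrange equation yields $\gamma\in C^2$ on $(0,\tau)$ with $\ddot\gamma=\nabla U(\gamma)$, i.e.\ Newton's equations. Since this is purely local, being assured by step two that $\gamma((0,\tau))\subset\Omega$ is exactly the input needed.

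There is no real obstacle here beyond verifying that the hypotheses of each cited result are met in turn; the only tiny subtlety is making clear that Marchal's theorem applies to the particular Tonelli minimizer produced in the first step (and not merely to some abstract minimizer), which is immediate since that curve satisfies the defining minimality identity $A(\gamma)=\phi(\gamma(0),\gamma(\tau),\tau)$.
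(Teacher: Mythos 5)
Your proposal is correct and follows exactly the paper's route: the corollary is stated there as the combination of Theorem \ref{TonelliNBP} (existence of a Tonelli minimizer), Marchal's theorem (interior points lie in $\Omega$), and the preceding regularity discussion (an absolutely continuous minimizer with image in $\Omega$ is $C^2$ and satisfies Newton's equations). Nothing is missing.
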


\subsection{Homogeneity of the critical action potential}
\label{homogeneity-section}

We shall introduce now some
properties which are consequence
of the homogeneity of the
Newtonian potential. Given a curve
$\gamma\in\calC (x,y,\tau)$
and two positive numbers
$\lambda,\mu >0$ we define the curve
$\gamma_{\lambda,\mu}\in
\calC(\lambda x,\lambda y,\mu)$
in the obvious way.
If $\gamma$ is defined for $t\in[a,b]$,
with $b-a=\tau$, then $\gamma_{\lambda,\mu}$
can be defined for
$s\in [\mu\tau^{-1}a,\mu\tau^{-1}b]$ by
\[\gamma_{\lambda,\mu}(s)=
\lambda\,\gamma(\tau\mu^{-1}s)\]

Using these curves we can deduce the
following lemma and corollaries.

\begin{lemma}
\label{homotecia.repar}
If $\mu=\lambda^{3/2}\tau$, then
$A(\gamma_{\lambda,\mu})=
\lambda^{1/2}A(\gamma)$.
\end{lemma}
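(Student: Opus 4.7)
The plan is to reduce everything to a direct computation via change of variables, exploiting two scaling facts: the quadratic homogeneity of the kinetic energy and the degree $-1$ homogeneity of the Newtonian potential $U$. Writing $c = \tau\mu^{-1}$ so that $\gamma_{\lambda,\mu}(s) = \lambda\,\gamma(cs)$ on $[\mu\tau^{-1}a,\mu\tau^{-1}b]$, a direct differentiation gives $\dot\gamma_{\lambda,\mu}(s) = \lambda c\,\dot\gamma(cs)$, hence
\[ T(\dot\gamma_{\lambda,\mu}(s)) = \lambda^2 c^2\, T(\dot\gamma(cs)), \qquad U(\gamma_{\lambda,\mu}(s)) = \lambda^{-1}\,U(\gamma(cs)). \]

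Next I would integrate $L = T + U$ over $[\mu\tau^{-1}a,\mu\tau^{-1}b]$ and perform the substitution $t = cs$, $dt = c\,ds$, which rescales the domain back to $[a,b]$ and divides by one factor of $c$. The result is
\[ A(\gamma_{\lambda,\mu}) = \lambda^2 c \int_a^b T(\dot\gamma(t))\,dt + \lambda^{-1} c^{-1} \int_a^b U(\gamma(t))\,dt. \]
The two coefficients collapse to a single factor precisely when $\lambda^2 c = \lambda^{-1} c^{-1}$, i.e. $c = \lambda^{-3/2}$, which is exactly the assumption $\mu = \lambda^{3/2}\tau$. Under this choice both coefficients equal $\lambda^{1/2}$, and the conclusion $A(\gamma_{\lambda,\mu}) = \lambda^{1/2} A(\gamma)$ follows.

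There is essentially no obstacle here: the statement is a pure scaling identity, and the only thing to watch is that the two prefactors obtained after the substitution do in fact coincide (which forces the relation between $\mu$, $\lambda$ and $\tau$). If $\gamma$ has infinite action the identity still holds trivially in $(0,+\infty]$ since both sides are $+\infty$, so no integrability hypothesis is needed.
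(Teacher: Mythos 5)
Your computation is correct and is essentially identical to the paper's proof: both perform the change of variables $t=\tau\mu^{-1}s$, use the quadratic scaling of the kinetic term and the degree $-1$ homogeneity of $U$, and observe that the two resulting prefactors coincide exactly when $\mu=\lambda^{3/2}\tau$. The remark that the identity holds trivially in $(0,+\infty]$ when the action is infinite is a harmless (and valid) addition.
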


\begin{proof}
A simple computation shows that the
action of $\gamma_{\lambda,\mu}$ is
\begin{eqnarray*}
A(\gamma_{\lambda,\mu})&\;=\;&
\frac{\lambda^2\tau^2}{\mu^2}\frac{1}{2}
\int_{\mu\tau^{-1}a}^{\mu\tau^{-1}b}
\abs{\dot\gamma(\tau\mu^{-1}s)}^2\,ds+
\frac{1}{\lambda}\int_{\mu\tau^{-1}a}^{\mu\tau^{-1}b}
U(\gamma(\tau\mu^{-1}s))\,ds\\
& &\\
&\;=\;&\frac{\lambda^2\tau}{\mu}
\frac{1}{2}\int_a^b
\abs{\dot\gamma(t)}^2\,dt+
\frac{\mu}{\lambda\tau}\int_a^b
U(\gamma(t))\,dt.
\end{eqnarray*}

It suffices now to make the substitution
$\mu=\lambda^{3/2}\tau$.
\end{proof}

\begin{corollary}
\label{phitau-homogeneo}
For all $x,y\in E^N$ and for every
$\tau,\lambda>0$ we have
\[\phi(\lambda x,\lambda y,\lambda^{3/2}\tau)=
\lambda^{1/2}\phi(x,y,\tau).\]
\end{corollary}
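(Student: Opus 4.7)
The plan is to deduce the corollary directly from Lemma \ref{homotecia.repar} by observing that the reparametrization and rescaling $\gamma \mapsto \gamma_{\lambda,\mu}$ (with $\mu = \lambda^{3/2}\tau$) sets up a bijection between the path spaces $\calC(x,y,\tau)$ and $\calC(\lambda x,\lambda y,\lambda^{3/2}\tau)$ under which the action is multiplied by $\lambda^{1/2}$.

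First I would verify that for every $\gamma \in \calC(x,y,\tau)$, the scaled curve $\gamma_{\lambda,\lambda^{3/2}\tau}$ lies in $\calC(\lambda x,\lambda y,\lambda^{3/2}\tau)$: this is immediate from its definition, since it has time length $\lambda^{3/2}\tau$ and endpoints $\lambda x, \lambda y$. Lemma \ref{homotecia.repar} then gives $A(\gamma_{\lambda,\lambda^{3/2}\tau}) = \lambda^{1/2} A(\gamma)$. Taking the infimum over $\gamma \in \calC(x,y,\tau)$ yields
\[\phi(\lambda x, \lambda y, \lambda^{3/2}\tau) \leq \lambda^{1/2}\phi(x,y,\tau).\]

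For the reverse inequality, I would apply the same construction in the opposite direction, using the scalars $\lambda' = \lambda^{-1}$ and $\tau' = \lambda^{3/2}\tau$, so that $\mu' = (\lambda')^{3/2}\tau' = \tau$. Then for any $\eta \in \calC(\lambda x,\lambda y,\lambda^{3/2}\tau)$ the curve $\eta_{\lambda^{-1},\tau}$ lies in $\calC(x,y,\tau)$ and satisfies $A(\eta_{\lambda^{-1},\tau}) = \lambda^{-1/2} A(\eta)$. Taking the infimum over $\eta$ gives $\phi(x,y,\tau) \leq \lambda^{-1/2}\phi(\lambda x, \lambda y, \lambda^{3/2}\tau)$, which combined with the previous inequality gives equality.

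There is no real obstacle here: the whole content is in the lemma, and the corollary is just the observation that the scaling is invertible, so infima correspond under it. The only thing to be mildly careful about is to apply the construction twice with matched parameters so that one recovers a genuine bijection on path spaces, rather than only an inequality in one direction.
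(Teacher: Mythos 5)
Your proposal is correct and follows essentially the same route as the paper: apply Lemma \ref{homotecia.repar} to (near-)minimizers to get $\phi(\lambda x,\lambda y,\lambda^{3/2}\tau)\leq\lambda^{1/2}\phi(x,y,\tau)$, then invoke the same inequality with $\lambda^{-1}$ and $\lambda^{3/2}\tau$ to obtain the reverse. The only cosmetic difference is that the paper works with an $\epsilon$-approximate minimizer rather than taking the infimum over the whole path space, which amounts to the same thing.
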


\begin{proof}
Let $\epsilon>0$ and
$\gamma\in\calC (x,y,\tau)$ such that
$A(\gamma)\leq \phi(x,y,\tau)+\epsilon$.
Setting $\mu=\lambda^{3/2}\tau$,
we write $\gamma_\lambda$
instead of $\gamma_{\lambda,\mu}$.
Thus we can apply lemma
\ref{homotecia.repar}, and we get that
\begin{eqnarray*}
A(\gamma_\lambda)& = &\lambda^{1/2}A(\gamma)\\
&\leq & \lambda^{1/2}\phi(x,y,\tau)+
\lambda^{1/2}\epsilon.
\end{eqnarray*}
Since
$\gamma_\lambda\in
\calC(\lambda x,\lambda y,\lambda^{3/2}\tau)$,
and $\epsilon>0$ is arbitrary, we deduce
that
\[\phi(\lambda x,\lambda y,\lambda^{3/2}\tau)
\leq\lambda^{1/2}\phi(x,y,\tau).\]
Therefore we also have
\begin{eqnarray*}
\phi(x,y,\tau) & = &
\phi(\lambda^{-1}\lambda x,
\lambda^{-1}\lambda y,
\lambda^{-3/2}\lambda^{3/2}\tau)\\
& \leq & \lambda^{-1/2}
\phi(\lambda x,\lambda y,\lambda^{3/2}\tau),
\end{eqnarray*}
which proves the reverse inequality.
\end{proof}

\begin{corollary}
\label{phi-homogeneo}
For all $x,y\in E^N$,
and every $\lambda>0$, we have
$\phi(\lambda x,\lambda y)=
\lambda^{1/2}\phi(x,y)$.
\end{corollary}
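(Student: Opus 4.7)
The plan is to reduce Corollary \ref{phi-homogeneo} to the already established Corollary \ref{phitau-homogeneo} by a simple change of variables in the infimum that defines the critical action potential. Recall that by definition
\[\phi(\lambda x,\lambda y)=\inf_{\sigma>0}\phi(\lambda x,\lambda y,\sigma).\]
The idea is to parametrize the times $\sigma>0$ as $\sigma=\lambda^{3/2}\tau$, where $\tau$ ranges over $(0,+\infty)$; since $\lambda>0$, this is a bijection of $(0,+\infty)$ onto itself.

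Performing the substitution and applying Corollary \ref{phitau-homogeneo} termwise inside the infimum, I would write
\[\phi(\lambda x,\lambda y)=\inf_{\tau>0}\phi(\lambda x,\lambda y,\lambda^{3/2}\tau)=\inf_{\tau>0}\lambda^{1/2}\phi(x,y,\tau).\]
Since $\lambda^{1/2}>0$ is a positive constant it may be pulled out of the infimum, and the remaining infimum is precisely $\phi(x,y)$ by definition. This yields
\[\phi(\lambda x,\lambda y)=\lambda^{1/2}\inf_{\tau>0}\phi(x,y,\tau)=\lambda^{1/2}\phi(x,y),\]
which is the desired equality.

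There is no real obstacle here: the content of the homogeneity was already absorbed into Corollary \ref{phitau-homogeneo}, whose proof in turn relied on Lemma \ref{homotecia.repar}. The only small point worth mentioning is that the map $\tau\mapsto\lambda^{3/2}\tau$ is a bijection of $(0,+\infty)$, which ensures the change of variables in the infimum is legitimate and no times are lost or added. The argument is symmetric under $\lambda\leftrightarrow\lambda^{-1}$, so no separate proof of the reverse inequality is required.
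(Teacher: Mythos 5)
Your proposal is correct and is essentially the paper's own proof: the paper simply says to take the infimum over $\tau>0$ in the identity of Corollary \ref{phitau-homogeneo}, and you have spelled out the (legitimate) change of variables $\sigma=\lambda^{3/2}\tau$ that justifies this. Nothing further is needed.
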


\begin{proof}
Take the
infimum over $\tau>0$
in the equality given by
corollary \ref{phitau-homogeneo}.
\end{proof}

\begin{corollary}\label{inv-rescalingFTM}
Given a free time minimizer
$\gamma:[a,b]\to E^N$, and $\lambda>0$,
the curve
\begin{eqnarray*}
\gamma_\lambda:
[\lambda^{3/2}a,\lambda^{3/2}b]&\to& E^N\\
t&\mapsto&
\gamma_\lambda(t)=
\lambda\gamma(\lambda^{-3/2}t)
\end{eqnarray*}
is also a free time minimizer.
\end{corollary}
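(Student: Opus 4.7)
The plan is to chain together two facts that are already available in the excerpt: the action scaling identity of Lemma \ref{homotecia.repar} and the homogeneity of the critical action potential in Corollary \ref{phi-homogeneo}. Concretely, I would fix an arbitrary compact subinterval $[a',b']\subset[\lambda^{3/2}a,\lambda^{3/2}b]$, write $a'=\lambda^{3/2}s$ and $b'=\lambda^{3/2}t$ with $[s,t]\subset[a,b]$, and set $\tau=t-s$ and $\mu=\lambda^{3/2}\tau=b'-a'$. The goal is to show
\[A\bigl(\gamma_\lambda\mid_{[a',b']}\bigr)=\phi\bigl(\gamma_\lambda(a'),\gamma_\lambda(b')\bigr).\]

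First I would observe that the restriction $\gamma_\lambda\mid_{[a',b']}$ is precisely the curve $(\gamma\mid_{[s,t]})_{\lambda,\mu}$ built from $\gamma\mid_{[s,t]}$ via the rescaling procedure of Section \ref{homogeneity-section}: indeed $\gamma_{\lambda,\mu}(r)=\lambda\gamma(\tau\mu^{-1}r)=\lambda\gamma(\lambda^{-3/2}r)$, which matches the formula in the statement. Lemma \ref{homotecia.repar} then gives
\[A\bigl(\gamma_\lambda\mid_{[a',b']}\bigr)=\lambda^{1/2}\,A\bigl(\gamma\mid_{[s,t]}\bigr).\]
Since $\gamma$ is a free time minimizer on $[a,b]$, the right-hand side equals $\lambda^{1/2}\phi(\gamma(s),\gamma(t))$, and Corollary \ref{phi-homogeneo} rewrites this as $\phi(\lambda\gamma(s),\lambda\gamma(t))=\phi(\gamma_\lambda(a'),\gamma_\lambda(b'))$. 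Combining these equalities yields the desired identity, so $\gamma_\lambda$ satisfies the defining property of a free time minimizer on every compact subinterval of its domain.

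There is no genuine obstacle here; the corollary is really a formal consequence of the scale invariance of the variational problem once the two preceding scaling statements have been established. The only mild care needed is to track the reparametrization correctly, i.e.\ to verify that restriction and rescaling commute and that the endpoints $\gamma_\lambda(a')$, $\gamma_\lambda(b')$ line up with $\lambda\gamma(s)$, $\lambda\gamma(t)$, which is immediate from the explicit formula $\gamma_\lambda(r)=\lambda\gamma(\lambda^{-3/2}r)$.
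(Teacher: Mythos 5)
Your proof is correct and follows essentially the same route as the paper: combine Lemma \ref{homotecia.repar} with Corollary \ref{phi-homogeneo}. The only (harmless) difference is that you verify the minimization identity on every compact subinterval directly, while the paper checks only the full interval $[\lambda^{3/2}a,\lambda^{3/2}b]$ and implicitly uses that restrictions of free time minimizers are again free time minimizers; your version is, if anything, slightly more complete.
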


\begin{proof}
If we denote $x=\gamma(a)$ and $y=\gamma(b)$, we have
\begin{eqnarray*}
A(\gamma_\lambda)&=&\lambda^{1/2}A(\gamma)\\
&=& \lambda^{1/2}\phi(x,y)\\
&=& \phi(\lambda x,\lambda y).
\end{eqnarray*}
On the other hand, it is clear that
$\gamma_\lambda\in\calC(\lambda x,\lambda y)$,
thus $\gamma_\lambda$ is a free time minimizer.
\end{proof}

\subsection{The Ma\~n\'e critical energy level}
\label{Manie-section}

In the Ma\~n\'e works,
the critical energy level
of a Tonelli Lagrangian
$L:TM\to\R$ on a connected
compact manifold $M$ is defined as
\[c(L)=\inf\set{c\in\R\mid
A_{L+c}(\gamma)\geq 0
\textrm{ for every closed curve }\gamma}.\]
It is easy to show that,
if for some $c\in\R$ there
is a closed curve $\gamma$
such that $A_{L+c}(\gamma)<0$,
then for any given pair of points
$x,y\in M$ the Lagrangian action
of $L+c$ has no lower bound in
the set of all absolutely
continuous curves from $x$ to $y$.
On the other hand, it can be proved that
the Lagrangian $L+c(L)$ admits free time
minimizers for any pair of prescribed endpoints
$x,y\in E^N$. Moreover, the energy constant
of this curves (also called semistatics)
is exactly $c(L)$, and using them and the compactness
of the manifold it can be proved the existence
of invariant measures supported in the critical
energy level, and the existence of
several compact invariant
sets with interesting dynamical properties.
We did not wish to develop here this theory
more than necessary to show the existing analogy,
even if our Lagrangian flow is not complete.
We will only show that in the Newtonian
$N$-body problem we have $c(L)=0$, and
that free time minimizers have zero energy.

For a natural mechanical system with bounded potential
energy $V:M\to\R$, it is very easy to see that
$c(L)=\sup V$. Suppose first that $c<\sup V$.
Then there is some open subset $A\subset M$
in which $c<V$. Any constant curve
$\gamma(t)=p\in A$ defined in some time interval
$[0,\tau]$ is a closed curve, and
clearly we have $A_{L+c}(\gamma)=\tau(c-V(p))<0$.
This proves that $c(L)\geq\sup V$. On the other
hand, if $c=\sup V$, then $L+c=T-V+c\geq 0$,
thus $A_{L+c}(\gamma)\geq 0$ whatever the
curve $\gamma$ is.
In our setting the potential energy is the
function $V=-U$, hence the critical
energy level is zero.

Suppose that
$\gamma:[a,b]\to M$ is an
absolutely continuous curve,
with finite action $A_L(\gamma)$,
and binding the points
$x=\gamma(a)$ and $y=\gamma(b)$
in time $\tau=b-a$.
A particular kind of variation
of the curve $\gamma$
can be obtained by reparametrization.
Consider for instance, for $\alpha>0$,
the linear reparametrization
$\gamma_\alpha:
[\alpha a,\alpha b]\to E^N$,
$\gamma_\alpha(s)=
\gamma(\alpha^{-1}s)$.
Thus we have
$\gamma_\alpha\in\calC(x,y,\alpha\tau)$
and $\gamma_1=\gamma$. The action of
$\gamma_\alpha$ is
\begin{eqnarray*}
A(\gamma_\alpha)&=&\alpha^{-2}\,
\frac{1}{2}\int_{\alpha a}^{\alpha b}
\abs{\dot\gamma(\alpha^{-1}s)}^2\,ds +
\int_{\alpha a}^{\alpha b}
U(\gamma(\alpha^{-1}s))\,ds\\
&=&\alpha^{-1}\,\frac{1}{2}
\int_a^b\abs{\dot\gamma(t)}^2\,dt +
\alpha\,\int_a^b U(\gamma(t))\,dt.
\end{eqnarray*}
Therefore
\[\frac{d}{d\alpha}A(\gamma_\alpha)
\mid_{\alpha=1}\,=\,
-\int_a^b\left(\frac{1}{2}\,\abs{\dot\gamma(t)}^2-
U(\gamma(t))\right)\,dt,\]
from which the following lemma can be deduced.

\begin{lemma}
\label{zeroenergy}
Let $I\subset\R$ be an open interval.
If $\gamma:I\to E^N$ is a free time minimizer,
then $\gamma$ is a trajectory with zero energy.
\end{lemma}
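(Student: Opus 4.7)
The argument is already set up by the reparametrization calculation displayed immediately before the lemma, and the plan is to close the loop it opens.

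Fix an arbitrary compact subinterval $[a,b]\subset I$ and set $x=\gamma(a)$, $y=\gamma(b)$, $\tau=b-a$. For every $\alpha>0$ the reparametrization $\gamma_\alpha(s)=\gamma(\alpha^{-1}s)$ lies in $\calC(x,y,\alpha\tau)\subset\calC(x,y)$, and by the free time minimizing property $A(\gamma)=\phi(x,y)\leq A(\gamma_\alpha)$. Hence the smooth function $\alpha\mapsto A(\gamma_\alpha)$ attains a minimum at $\alpha=1$. The formula for $\frac{d}{d\alpha}A(\gamma_\alpha)\mid_{\alpha=1}$ displayed just above the lemma must therefore vanish, giving
\[\int_a^b\Bigl(\tfrac{1}{2}\abs{\dot\gamma(t)}^2-U(\gamma(t))\Bigr)\,dt\;=\;0\]
for every compact subinterval $[a,b]\subset I$.

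Next I would use Marchal's theorem to upgrade this integral identity to a pointwise one. Since $I$ is open, every $t\in I$ is interior to some compact subinterval $[a,b]\subset I$, and on each such subinterval $\gamma$ minimizes the action in $\calC(\gamma(a),\gamma(b),b-a)$; so Marchal's theorem gives $\gamma(t)\in\Omega$ for every $t\in I$. By the regularity discussion in the preceding section, $\gamma$ is then a genuine $C^2$ solution of Newton's equations on $I$, along which the energy $h=T(t)-U(t)$ is constant. The integral identity becomes $h\cdot(b-a)=0$ for all compact $[a,b]\subset I$, forcing $h=0$.

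The only delicate point is the appeal to Marchal's theorem, which is why I would state it explicitly: without ruling out interior collisions one cannot talk about energy conservation in the classical sense, and a purely distributional $T-U=0$ statement would be much weaker than the lemma asserts. Everything else is a one-parameter variational computation that has essentially been carried out in the display preceding the lemma.
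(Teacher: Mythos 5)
Your proof is correct and follows essentially the same route as the paper's: Marchal's theorem to get $\gamma(I)\subset\Omega$ and hence a genuine smooth motion with constant energy $h$, then the vanishing of $\frac{d}{d\alpha}A(\gamma_\alpha)\big|_{\alpha=1}$ for the linear time-reparametrization to conclude $h(b-a)=0$. The only difference is cosmetic (you perform the variational computation before the regularity step, the paper does it after), so nothing further is needed.
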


\begin{proof}
It is clear that for every
compact subinterval $[a,b]\subset I$,
we have that $\gamma\mid_{[a,b]}$
is a Tonelli minimizer, meaning that
$A(\gamma\mid_{[a,b]})=
\phi(\gamma(a),\gamma(b),b-a)$.
By Marchal's theorem we know that
$\gamma(t)\in\Omega$ for every $t\in (a,b)$.
Since $[a,b]\subset I$ is arbitrary, we
conclude that $\gamma(I)\subset\Omega$.
This implies that $\gamma$ is a smooth curve
which corresponds to a true motion. Therefore
$\gamma$ must have constant energy $h=T(t)-U(t)$.

We fix now some interval $[a,b]\subset I$ and
we define the variation $\gamma_\alpha$ as above
given by linear reparametrization of time.
Since $\gamma\mid_{[a,b]}$ is also a free
time minimizer, we must have
\[\frac{d}{d\alpha}A(\gamma_\alpha)
\mid_{\alpha=1}\,=\,-h(b-a)\,=\,0\]
which proves that $h=0$.
\end{proof}

\section{Existence of free time minimizers}

\subsection{Free time minimizers between two given configurations}

We know that for each configuration $x\in E^N$
and each $\tau>0$ there is a Tonelli minimizer
$\gamma\in\calC (x,x,\tau)$ defined in $[0,\tau]$.
It was proved (\cite{Mad} corollary 10) that we must have
$A(\gamma)=\phi(x,x,\tau)\leq \mu \tau^{1/3}$
for a constant $\mu>0$ which not depends on $x$.
Therefore we have $\phi(x,x)=0$. In other words,
given $x\in E^N$, we can leave $x$
and return to $x$ with small displacements,
in small times, and in such a way that
the action becomes arbitrarily small.
However, this does not happen when the
extremal configurations are different.
If $x\neq y$ and we try to minimize the
action from $x$ to $y$, we can see that
a curve defined on a short interval of
time has a too expensive action because
the average kinetic energy must be large.
On the other hand, also we will see that
once two configurations $x,y\in E^N$ are
fixed, the minimal action
$\phi(x,y,\tau)$ becomes arbitrarily
large for $\tau>0$ large enough.
These arguments enable us to prove the
following result.

\begin{theorem}
\label{FTMbetween2conf}
Given any two different configurations $x\neq y$
in $E^N$, there is $\tau>0$
and $\gamma\in\calC(x,y,\tau)$ such that
$A(\gamma)=\phi(x,y)$.
\end{theorem}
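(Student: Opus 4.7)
The plan is to identify $\phi(x,y)$ with the value $\phi(x,y,\tau^*)$ at some specific time $\tau^*\in(0,+\infty)$ and then apply Tonelli's theorem \ref{TonelliNBP} to produce a minimizer in $\calC(x,y,\tau^*)$. I would pick a minimizing sequence of times $\tau_n>0$ with $\phi(x,y,\tau_n)\to\phi(x,y)$, and for each $n$ invoke \ref{TonelliNBP} to choose $\gamma_n\in\calC(x,y,\tau_n)$ realising $A(\gamma_n)=\phi(x,y,\tau_n)$. Everything then reduces to showing that the sequence $\tau_n$ remains in a compact subset of $(0,+\infty)$, because once $\tau_n\to\tau^*$ the curves $\gamma_n$ can be reparametrised to the fixed interval $[0,\tau^*]$ without changing their actions significantly, giving a sequence in $\calC(x,y,\tau^*)$ with actions converging to $\phi(x,y)$.

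For the lower bound on $\tau_n$, inequality \eqref{lowerboundaction} with $s=0$ and $t=\tau_n$ gives $\abs{y-x}\leq (2A(\gamma_n)\tau_n)^{1/2}$, so $\phi(x,y,\tau_n)\geq \abs{y-x}^2/(2\tau_n)$; since $x\neq y$ and $\phi(x,y,\tau_n)$ converges to the finite value $\phi(x,y)$, this bounds $\tau_n$ away from $0$. For the upper bound I would use the polar decomposition $\gamma_n=\rho_n u_n$ from subsection \ref{polaraction}, setting $U_*=\min\set{U(u)\mid I(u)=1}>0$, which is positive by the very definition of minimal configurations. The expression \eqref{polaraction-expression} yields
\[
A(\gamma_n)\;\geq\;\int_0^{\tau_n}\rho_n(t)^{-1}U(u_n(t))\,dt\;\geq\; U_*\int_0^{\tau_n}\rho_n(t)^{-1}\,dt,
\]
while the kinetic term in \eqref{polaraction-expression} and Cauchy--Schwarz applied to $\dot\rho_n$ give $\rho_n(t)\leq \rho_n(0)+(2A(\gamma_n)\tau_n)^{1/2}$ for every $t\in[0,\tau_n]$. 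Combining these inequalities and using $\int_0^{\tau_n}\rho_n^{-1}\geq \tau_n/\max\rho_n$ yields
\[
A(\gamma_n)\;\geq\;\frac{U_*\,\tau_n}{\abs{x}+(2A(\gamma_n)\tau_n)^{1/2}},
\]
which rearranges into a bound of the form $A(\gamma_n)\gtrsim \tau_n^{1/3}$ as $\tau_n\to+\infty$, and therefore forces $\tau_n$ to remain bounded above.

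With $\tau_n\to\tau^*\in(0,+\infty)$ along a subsequence, I would linearly reparametrise by $\tilde\gamma_n(t)=\gamma_n(\tau_n t/\tau^*)$, so that $\tilde\gamma_n\in\calC(x,y,\tau^*)$. A direct change of variables splits the action as $A(\tilde\gamma_n)=(\tau_n/\tau^*)T_n+(\tau^*/\tau_n)P_n$, where $T_n=\frac{1}{2}\int_0^{\tau_n}\abs{\dot\gamma_n}^2$ and $P_n=\int_0^{\tau_n}U(\gamma_n)$ are the kinetic and potential parts of $A(\gamma_n)=T_n+P_n$. Since $T_n,P_n\leq A(\gamma_n)$ are uniformly bounded and $\tau_n/\tau^*,\tau^*/\tau_n\to 1$, this gives $A(\tilde\gamma_n)-A(\gamma_n)\to 0$, hence $A(\tilde\gamma_n)\to\phi(x,y)$. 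Consequently $\phi(x,y,\tau^*)\leq\phi(x,y)$, and since the reverse inequality is automatic from $\phi(x,y)=\inf_{\tau>0}\phi(x,y,\tau)$, we obtain $\phi(x,y,\tau^*)=\phi(x,y)$. A final application of theorem \ref{TonelliNBP} to $\calC(x,y,\tau^*)$ then delivers the desired minimizer.

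The main obstacle is the large-time bound on $\tau_n$: the short-time divergence of $\phi(x,y,\cdot)$ is an immediate consequence of the kinetic-energy estimate \eqref{lowerboundaction}, whereas the large-time divergence relies on the polar-decomposition argument above, whose key ingredient is the strict positivity of $U_*$, i.e.\ the existence of minimal configurations recalled in the introduction.
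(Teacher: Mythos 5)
Your proof is correct and follows essentially the same route as the paper: take a minimizing sequence, use the kinetic estimate (\ref{lowerboundaction}) to bound $\tau_n$ away from $0$, use a lower bound on the potential along curves whose positions stay at distance $O(\tau_n^{1/2})$ from $x$ to bound $\tau_n$ from above, then linearly reparametrize to a fixed $\tau^*$ to get $\phi(x,y,\tau^*)=\phi(x,y)$ and apply Tonelli's theorem \ref{TonelliNBP}. The only cosmetic difference is that you obtain the potential lower bound from the homogeneity inequality $U\geq U_*\,I^{-1/2}$, whereas the paper uses $U(z)\geq m_0^2/2K$ for $\norm{z}\leq K$; both are the same idea.
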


Of course, in case $x=y$ we can define the free time
minimizer as the constant curve on a trivial interval
of zero length, but this convention will be useless
for us. We will need the following lemma.

\begin{lemma}
\label{estimaciontaumin}
Let $d=\norm{x-y}$, $m_0=\min \set{m_1,\dots,m_N}$
and $\tau>0$. If $\gamma\in\calC(x,y,\tau)$ is such
that $A(\gamma)\leq A$, then
$2A\,\tau\geq m_0\,d^2$.
\end{lemma}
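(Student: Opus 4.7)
The plan is to discard the potential term from the action, bound $A(\gamma)$ from below by (twice) the kinetic integral, and then apply the Cauchy--Schwarz inequality body by body to convert the kinetic integral into a displacement bound. The inequality is really just a quantitative statement that to move mass $m_0$ a Euclidean distance $d$ in time $\tau$ you must spend at least $m_0 d^2/(2\tau)$ units of kinetic action.

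First, since $U>0$, we immediately get
\[
A(\gamma)\;\geq\;\frac{1}{2}\int_0^{\tau}\sum_{i=1}^N m_i\,\norm{\dot r_i(t)}^2\,dt.
\]
Then, for each body $i$, Cauchy--Schwarz applied to $r_i(\tau)-r_i(0)=\int_0^{\tau}\dot r_i(t)\,dt$ yields
\[
\norm{r_i(\tau)-r_i(0)}^2\;\leq\;\tau\int_0^{\tau}\norm{\dot r_i(t)}^2\,dt.
\]
Summing over $i$ \emph{without} mass weights, and interpreting $\norm{x-y}^2=\sum_{i=1}^N\norm{r_i(\tau)-r_i(0)}^2$, gives
\[
\norm{x-y}^2\;\leq\;\tau\int_0^{\tau}\sum_{i=1}^N\norm{\dot r_i(t)}^2\,dt.
\]
Finally, using $1\leq m_i/m_0$ to insert the masses at the cost of a factor $1/m_0$, I would conclude
\[
m_0\,d^2\;\leq\;\tau\int_0^{\tau}\sum_{i=1}^N m_i\,\norm{\dot r_i(t)}^2\,dt\;\leq\;2\tau\,A(\gamma)\;\leq\;2A\tau.
\]

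There is no real obstacle here: the estimate follows from the positivity of $U$ and a single application of Cauchy--Schwarz per body. The only point worth being explicit about is the choice of norm on $E^N$: $\norm{x-y}$ is the unweighted Euclidean norm $\bigl(\sum_i\norm{r_i-s_i}^2\bigr)^{1/2}$, not the mass-weighted norm $\abs{\cdot}$ used elsewhere in the paper, and this is exactly why the factor $m_0$ appears on the right-hand side.
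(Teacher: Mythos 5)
Your proof is correct and takes essentially the same route as the paper's: discard the positive potential term and apply Cauchy--Schwarz to the kinetic integral (the paper does this in one stroke for the whole configuration curve in the mass norm, via its inequality (\ref{lowerboundaction}), and then picks out the body realizing the distance, rather than arguing body by body, but the computation is identical). One small caveat: the paper's convention is $\norm{x-y}=\max_i\norm{r_i-s_i}_E$, not the unweighted $\ell^2$ norm you assume; since your $d$ dominates the paper's, your estimate is slightly stronger and the lemma as stated still follows.
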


\begin{proof}
Let $x=(r_1,\dots,r_N)$ and
$y=(s_1,\dots,s_N)$.
Since $\norm{x-y}=\max\norm{r_i-s_i}_E$,
we can choose
$i_0\in\set{1,\dots,N}$ for which
$d=\norm{r_{i_0}-s_{i_0}}_E$.
Using now the inequality
(\ref{lowerboundaction}) of the precedent section,
we get $2A(\gamma)\,\tau\geq \abs{x-y}^2$. Since
$\abs{x-y}^2\geq m_{i_0}\norm{r_{i_0}-s_{i_0}}_E^2$
we conclude that $2A\,\tau\geq m_0\,d^2$.
\end{proof}

\begin{proof}[Proof of theorem \ref{FTMbetween2conf}]
Let $x\neq y$ be two given configurations.
A sequence of curves $\gamma_n\in \calC(x,y,\tau_n)$,
$n\geq 0$, will be called minimizing if
it satisfies $\phi(x,y)=\lim A(\gamma_n)$. Of course,
the existence of such sequences of curves follows from
the definition of $\phi(x,y)$.

As a first step of the proof
we will show that, given a sequence
$\gamma_n\in \calC(x,y,\tau_n)$
for which $A=\sup\set{A(\gamma_n)}<+\infty$,
there are positive
constants $0<T_0<T_1$ such that for all $n\geq 0$
we have $T_0\leq \tau_n \leq T_1$.
We observe first that by lemma \ref{estimaciontaumin},
we know that the lower bound $\tau_n\geq T_0= m_0\,d^2/2A$,
where $d=\norm{x-y}>0$ and $m_0=\min \set{m_1,\dots,m_N}$,
holds for all $n\geq 0$.
Moreover, if we fix any $n\geq 0$, and we restrict
the curve $\gamma_n$ to an interval $[0,t]$ with
$t\leq \tau_n$, once again by application of lemma
\ref{estimaciontaumin} we deduce that
\[d_n(t)=\norm{\gamma_n(t)-x}\leq (2A\,\tau_n/m_0)^{1/2}\]
hence we have
\begin{equation}\label{31}
\norm{\gamma_n(t)}\leq\norm{x}+(2A\,\tau_n/m_0)^{1/2}.
\end{equation}
Once we know the positions are bounded,
we get a lower bound for the
Newtonian potential throughout the curve
$\gamma_n$. Indeed, if $\norm{z}\leq K$ then
$U(z)\geq m_0^2/2K$. Thus we have
\[U(\gamma_n(t))\geq
\frac{m_0^2}{2(\norm{x}+(2A\,\tau_n/m_0)^{1/2})}\]
for all $t\in[0,\tau_n]$, and we conclude that
the inequality
\[A\geq A(\gamma_n)\geq
\frac{\tau_n\,m_0^2}{2(\norm{x}+(2A\,\tau_n/m_0)^{1/2})}\]
holds for all $n\geq 0$. But the right hand of the last
inequality is upper bounded if and only if the sequence
$\tau_n$ also is it. Thus we have proved the existence
of the positive constants $T_0<T_1$ as required.

The second step of the proof consists in applying the
first step to some minimizing sequence of curves,
which allow us to deduce the existence of a new
minimizing sequence of curves, but contained in a fixed
set $\calC(x,y,\tau)$.

More precisely, we start by choosing
a minimizing sequence $\gamma_n\in \calC(x,y,\tau_n)$,
$n\geq 0$. Since we know that
$0<T_0\leq \tau_n\leq T_1$ for all $n\geq 0$,
we can assume without loss of generality that
$\tau_n\to \tau>0$. Then we define
$\gamma^*_n:[0,\tau]\to E^N$
by $\gamma^*_n(t)=\gamma_n(\tau_n\,t/\tau)$,
and we can write
\[A(\gamma^*_n)=\frac{\tau_n}{\tau}\frac{1}{2}\,
\int_0^{\tau_n}\abs{\dot\gamma_n(t)}^2\,dt\,+\,
\frac{\tau}{\tau_n}\int_0^{\tau_n}U(\gamma_n(t))\,dt\]
from which we deduce that
$\lim A(\gamma^*_n)=\lim A(\gamma_n)=\phi(x,y)$.
Since each curve $\gamma^*_n$ is in $\calC(x,y,\tau)$,
we conclude that $\phi(x,y,\tau)\leq\phi(x,y)$. Thus
we must have $\phi(x,y,\tau)=\phi(x,y)$, which
reduces the proof to the application of the Tonelli's
theorem \ref{TonelliNBP}.
\end{proof}

\subsection{Homothetic free time minimizers}

Now we prove, as we announced in the introduction,
that every parabolic homothetic motion by a minimal
configuration $a\in E^N$ is a free time minimizer.
Recall that a minimal configuration is nothing but
a global minimum of the homogeneous function $I^{1/2}U$.

We start assuming that the minimal configuration
$a_0$ is also a normal configuration,
in the sense that $I(a_0)=1$.
Thus we have $U(a_0)=U_0$ where
$U_0=\min\set{U(x)\mid I(x)=1}$.
The corresponding parabolic homothetic ejection is
the curve $\gamma_0(t)=\mu_0\,t^{2/3}a_0$, where $\mu_0$ is
the only positive constant such that the curve $\gamma_0$
defines a motion for $t>0$. A simple
computation shows that the value of $\mu_0$
must be $(9\,U_0/2)^{1/3}$.
Note that $\gamma_0$ passes through $a_0$
in time $t_0=\mu_0^{-3/2}>0$.

\begin{lemma}
If $t_1>t_0$ then we have
$A(\gamma)\geq A(\gamma_0\mid_{[t_0,t_1]})$
for every $\gamma\in\calC(a_0,\gamma_0(t_1))$.
Moreover, the equality holds if and only if
$\gamma=\gamma_0\mid_{[t_0,t_1]}$
(modulo translation in time).
\end{lemma}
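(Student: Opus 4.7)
The plan is to use the polar decomposition from Section \ref{polaraction} to reduce the minimization to a one-dimensional problem on the half-line, and then exploit the scale-invariant AM-GM trick for zero-energy Kepler orbits.

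First, take any $\gamma \in \calC(a_0,\gamma_0(t_1))$, say defined on $[a,b]$. Since the endpoints are both non-collision configurations proportional to $a_0$, the curve $\gamma$ avoids total collision near each endpoint; by Marchal's theorem (after restricting to short end-intervals and passing to the interior) one may assume $\gamma$ avoids total collision on all of $[a,b]$, so the decomposition $\gamma(s)=\rho(s)u(s)$ with $I(u(s))=1$ is well-defined. Applying formula (\ref{polaraction-expression}) and using the two elementary bounds
\[
\tfrac12\rho(s)^2\dot u(s)^2\geq 0,\qquad U(u(s))\geq U_0,
\]
the second because $a_0$ realizes the minimum of $U$ on $\{I=1\}$, one obtains
\[
A(\gamma)\;\geq\;\int_a^b\Bigl(\tfrac12\dot\rho(s)^2+\tfrac{U_0}{\rho(s)}\Bigr)\,ds.
\]
Thus $A(\gamma)$ is bounded below by the one-dimensional Kepler action $A_K(\rho)$ of the radial factor, taken between $\rho(a)=1$ and $\rho(b)=\mu_0 t_1^{2/3}$.

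The next step is to show that $\rho_0(t)=\mu_0 t^{2/3}$ is a free time minimizer of $A_K$ between these endpoints. For this I would use the pointwise inequality
\[
\tfrac12\dot\rho^2+\tfrac{U_0}{\rho}\;\geq\;\sqrt{\tfrac{2U_0}{\rho}}\,\dot\rho,
\]
valid for every $\rho>0$, with equality if and only if $\tfrac12\dot\rho^2=U_0/\rho$, i.e.\ the radial motion has zero energy and $\dot\rho\geq 0$. Integrating and changing variables $r=\rho(s)$ gives the purely geometric lower bound
\[
A_K(\rho)\;\geq\;\int_1^{\mu_0 t_1^{2/3}}\sqrt{2U_0/r}\,dr\;=\;2\sqrt{2U_0}\bigl(\sqrt{\mu_0}\,t_1^{1/3}-1\bigr),
\]
depending only on the endpoints. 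A direct computation (using $\mu_0^3=9U_0/2$) shows $A(\gamma_0|_{[t_0,t_1]})$ equals exactly this quantity, so $\gamma_0|_{[t_0,t_1]}$ achieves the lower bound on $A(\gamma)$ as well.

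Finally, I would unwind the equality cases to get rigidity. Equality in $\tfrac12\rho^2\dot u^2\geq 0$ forces $u$ constant; equality in $U(u)\geq U_0$ together with the boundary condition $u(a)=a_0$ forces $u\equiv a_0$; equality in the Kepler pointwise inequality forces $\rho$ to be a monotone zero-energy solution of the reduced Kepler problem, which, upon integration, is uniquely $\rho(s)=\mu_0(s-c)^{2/3}$ for some constant $c$. Matching $\rho(a)=1$ yields $s-c=t_0$, so $\gamma(s)=\gamma_0(s-c)$, i.e.\ the given minimizer coincides with $\gamma_0|_{[t_0,t_1]}$ after a time translation. The main obstacle is justifying the polar decomposition in the presence of potential total-collision instants; invoking Marchal's theorem on the Tonelli minimizer to which $\gamma$ may be replaced (with no larger action) handles this cleanly.
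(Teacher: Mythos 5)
Your proposal is essentially correct and shares the paper's overall skeleton (polar decomposition, discard the angular kinetic term, bound $U(u)\geq U_0$, reduce to a one--dimensional Kepler problem on the half-line), but the two key steps are handled differently, one for the better and one for the worse. For the worse: you dispose of possible total collisions by replacing $\gamma$ with a fixed-time Tonelli minimizer and invoking Marchal's theorem. This works, but it imports the hypothesis $\dim E\geq 2$ into a lemma that the paper states and proves without any dimension restriction, and it uses a deep theorem where an elementary device suffices: the paper simply restricts $\gamma$ to $[s',s_1]$, where $s'$ is the \emph{last} time at which $I(\gamma(s'))=1$, so that $\rho=I^{1/2}\geq 1>0$ on the truncated curve and the polar decomposition is automatically legitimate (the truncation can only decrease the action, and equality in that step forces $s'=s_0$). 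You should adopt that truncation instead of Marchal; note also that for the equality case your Marchal route needs the extra remark that a curve achieving the bound is itself a fixed-time minimizer before the theorem applies to it. For the better: your calibration inequality
\[
\tfrac12\dot\rho^2+\tfrac{U_0}{\rho}\;\geq\;\sqrt{2U_0/\rho}\;\dot\rho
\;=\;\frac{d}{ds}\Bigl(2\sqrt{2U_0}\,\sqrt{\rho}\Bigr),
\]
whose integral depends only on the endpoints and is attained exactly by the monotone zero-energy parabolic arc $\rho(s)=\mu_0(s-c)^{2/3}$, is a cleaner and more self-contained resolution of the reduced Kepler minimization than the paper's appeal to ``direct computations or the arguments of Theorem \ref{FTMbetween2conf}''; your endpoint computation $2\sqrt{2U_0}\bigl(\sqrt{\mu_0}\,t_1^{1/3}-1\bigr)=A(\gamma_0\mid_{[t_0,t_1]})$ checks out using $\mu_0^3=9U_0/2$, and your unwinding of the equality cases ($\dot u\equiv 0$, $U(u)\equiv U_0$, $u(a)=a_0$, and the first-order ODE $\dot\rho=\sqrt{2U_0/\rho}$) correctly yields rigidity up to time translation. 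With the truncation substituted for Marchal, your argument is complete and in fact slightly sharper than the one in the paper.
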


\begin{proof}
Let $\gamma$ be a curve in $\calC(a_0,\gamma_0(t_1))$.
Let us suppose that $\gamma $ is defined
in a given interval of time $[s_o,s_1]$.
Thus we have $\gamma(s_0)=\gamma_0(t_0)=a_0$
and $\gamma(s_1)=\gamma_0(t_1)$.
It is clear that there is a unique number
$s'\in [s_0,s_1)$ such that $I(\gamma(s'))=1$
and such that $I(\gamma(s))>1$ for every $s\in (s',s_1]$.
Thus we can define the curve $\gamma_1$ as
$\gamma\mid_{[s',s_1]}$, and obviously we have
$A(\gamma)\geq A(\gamma_1)$ with
equality holding if and only if $s'=s_0$.

Since $\gamma_1(s) \neq 0 $ we can now write $\gamma_1$
in polar coordinates $\gamma_1(s)=\rho_1(s)u_1(s)$,
where $\rho(s)\geq 1$ and $I(u_1(s))=1$ for all $s\in[s',s_1]$.
Let now $\gamma_2\in\calC(a_0,\gamma_0(t_1))$
be a second curve, that we define as
$\gamma_2(s)=\rho_1(s)a_0$ for  $s\in[s',s_1]$.
Using the expression of the Lagrangian action in
polar coordinates deduced in section (\ref{polaraction})
we conclude that $A(\gamma_2)\leq A(\gamma_1)$ and that the equality
holds if and only if $\gamma_2=\gamma_1$.

More precisely the action of $\gamma_2$ in polar coordinates is
\begin{equation}
A(\gamma_2)=\frac{1}{2}\int_{s'}^{s_1}\dot\rho_1(s)^2\,ds+
U_0\,\int_{s'}^{s_1}\rho_1(s)^{-1}\,ds.
\end{equation}
Note that this quantity is exactly the Lagrangian action
of $\rho_1(s)$ for the Kepler problem in the line with Lagrangian
\[L_{\kappa}(\rho,\dot\rho)= \frac{1}{2}\dot\rho\,^2+\frac{U_0}{\rho}\,.\]
On the other hand, we know that for this Keplerian Lagrangian
there is a free time minimizer curve from $\rho_1(s')=1$ and
$\rho_1(s_1)=\mu_0t_1^{2/3}$, and must have zero energy.
This assertion can be proved by direct computations,
or using the same arguments given in the proof of
theorem \ref{FTMbetween2conf}.
It is very easy to see that there is only one extremal
curve of the Lagrangian $L_{\kappa}$
(modulo translation of the time interval),
with zero energy and the required extremities,
namely $\rho(s)=\mu_0s^{2/3}$ for $s\in[t_0,t_1]$.

We conclude that $A(\gamma_2)\geq A(\gamma_0\mid_{[t_0,t_1]})$.
Moreover, the equality holds if and only if
$\rho_1(s)=\mu_0(t_0+s-s')^{2/3}$ for all $s\in [s',s_1]$,
in which case we must also have $s_1-s'=t_1-t_0$.

The above considerations finish the proof of the lemma,
since we have showed that
\[A(\gamma)\geq A(\gamma_1)\geq\ A(\gamma_2)
\geq A(\gamma_0\mid_{[t_0,t_1]})\]
and that we have equality if and only if
$\gamma(s)=\gamma_0(t_0+(s-s_0))$.
\end{proof}

\begin{proposition}
Let $a\in E^N$ be a minimal configuration, and $\mu>0$ such
that the curve defined for $t>0$ as $\gamma(t)=\mu\, t^{2/3}a$
is an homothetic (parabolic) motion.
The continuous extension of $\gamma$ to
$[0,+\infty)$ is a free time minimizer with
total collision at $t=0$.
\end{proposition}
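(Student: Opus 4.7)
The plan is to reduce to the case of a normal minimal configuration, use the preceding lemma together with a short triangle-equality argument and the rescaling invariance of free time minimizers to establish the property on every compact subinterval of $(0,+\infty)$, and finally extend the property across the total collision at $t=0$ using the continuity of the action potential.

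For the reduction, set $r=I(a)^{1/2}$ and $a_0=r^{-1}a$, so that $a_0$ is a normal minimal configuration. Using $\mu=(9U(a)/(2I(a)))^{1/3}$ together with $U(a)=r^{-1}U_0$, one gets $\mu=r^{-1}\mu_0$, whence $\gamma(t)=\mu t^{2/3}a=\mu_0 t^{2/3}a_0=\gamma_0(t)$. Thus the curve of the proposition is exactly the homothetic ejection $\gamma_0$ studied in the preceding lemma, and it suffices to show that its continuous extension to $[0,+\infty)$ is a free time minimizer.

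For $t_0<s<t$, applying the preceding lemma once with $t_1=s$ and once with $t_1=t$, and combining additivity of the action with the triangle inequality for $\phi$, one gets
\[ A(\gamma_0|_{[s,t]}) \;=\; \phi(a_0,\gamma_0(t))-\phi(a_0,\gamma_0(s)) \;\le\;\phi(\gamma_0(s),\gamma_0(t)), \]
while the reverse inequality is immediate; hence $\gamma_0|_{[t_0,T]}$ is a free time minimizer for every $T>t_0$. Applying Corollary \ref{inv-rescalingFTM} with scale $\lambda>0$ produces another free time minimizer, which by the identity $\lambda\gamma_0(\lambda^{-3/2}t)=\gamma_0(t)$ is again $\gamma_0$ restricted to $[\lambda^{3/2}t_0,\lambda^{3/2}T]$. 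Letting $\lambda$ and $T$ vary, $\gamma_0|_{[s,t]}$ is a free time minimizer for every $0<s<t$.

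It remains to verify that $A(\gamma_0|_{[0,b]})=\phi(0,\gamma_0(b))$ for each $b>0$. The inequality $\phi(0,\gamma_0(b))\le A(\gamma_0|_{[0,b]})$ is immediate, and a direct integration shows that $A(\gamma_0|_{[0,b]})$ is finite. By monotone convergence, $A(\gamma_0|_{[0,b]})=\lim_{s\to 0^+}A(\gamma_0|_{[s,b]})$, and by the previous step this equals $\lim_{s\to 0^+}\phi(\gamma_0(s),\gamma_0(b))$. Since the critical action potential is H\"older continuous on $E^N\times E^N$ (see \cite{Mad}) and $\gamma_0(s)\to 0$, the right-hand side equals $\phi(0,\gamma_0(b))$, as required. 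The main obstacle is precisely this last step: because the Newtonian potential blows up at total collision, the lemma's polar-coordinate argument cannot be applied on intervals containing $0$, and one must rely on the extrinsic continuity of $\phi$ at collision configurations to close the argument.
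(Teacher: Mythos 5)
Your proposal is correct and follows essentially the same route as the paper: reduce to the normalized configuration $a_0$, use the preceding lemma plus Corollary \ref{inv-rescalingFTM} to get the minimizing property on every $[s,t]\subset(0,+\infty)$, and pass to the limit $s\to 0^+$ using the finiteness of the action and the continuity of $\phi$ to handle subintervals containing the total collision. The only difference is that you make explicit (via the triangle inequality for $\phi$ and additivity of the action) the step that restrictions of the lemma's minimizer to interior subintervals are themselves free time minimizers, which the paper leaves implicit.
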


\begin{proof}
In order to apply the previous lemma, we write $\gamma$
in the form $\gamma(t)=\mu_0\,t^{2/3}a_0$ with $I(a_0)=1$.
Therefore we know that, if $\gamma(t_0)=a_0$ and $t_1>t_0$,
then $\gamma\mid_{[t_0,t_1]}$ is a free time minimizer.

Let us fix $T>0$ and $\epsilon\in (0,T)$.
Taking $\lambda>0$ in such a way that $\epsilon=\lambda^{3/2}t_0$
and using corollary \ref{inv-rescalingFTM}, we can deduce that
$\gamma\mid_{[\epsilon,T]}$ is also a free time minimizer.
This means that
\[A(\gamma\mid_{[\epsilon,T]})=\phi(\gamma(\epsilon),\gamma(T))\,.\]
But
\[\lim_{\epsilon\to 0}A(\gamma\mid_{[\epsilon,T]})=
A(\gamma\mid_{[0,T]})\]
and $\phi$ is continuous, so we conclude that $\gamma\mid_{[0,T]}$
is a free time minimizer.
Since $T>0$ is arbitrary, the proof is complete.
\end{proof}

\subsection{Calibrating curves of weak KAM solutions}

As we said, thanks to the weak KAM theorem we know that
there are a lot of free time minimizers defined over unbounded
intervals. Moreover, this theory allows to establish that
for any configuration of bodies $x\in E^N$, there is at least one
free time minimizer $\gamma_x(t)$ defined for all time $t\geq 0$
and such that $\gamma_x(0)=x$ (see \cite{Mad}, proposition 15).

The interesting fact here, is that we have a lamination of
$E^N$ by such curves associated to each weak KAM solution
of the Hamilton-Jacobi equation. They are called calibrating
curves of the weak KAM solution. One of the main reasons for
studying the dynamics of these curves is precisely the link
with weak KAM theory. We hope that the results presented here
will be useful to characterize the set of these weak solutions,
either in the general case or for generic values ​​of the masses.

\section{Proof of theorems \ref{main} and \ref{nocompleteFTM}}

The following two results established by Pollard
in \cite{Poll} will be used in the proof of
theorem \ref{main}.
In both cases it is assumed that the center of mass
of the motion is fixed at $0\in E$.

\begin{theorem}[Pollard \cite{Poll}, theorem 1.2]
\label{Ilowerbound}
If the energy of a motion
$x:[t_0 ,+\infty)\to \Omega$ is zero, and
the center of mass
satisfies $G(x(t))=0$ for all $t\geq t_0$,
then there is a positive constant
$\alpha_0>0$ such that
$I(t)\geq \alpha_0 (t-t_0)^{4/3}$ for all
$t\geq t_0$.
\end{theorem}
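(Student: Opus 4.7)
The plan is to derive and then integrate a differential inequality for $I(t)$. The zero-energy hypothesis turns the Lagrange--Jacobi identity recalled in the preliminaries into $\ddot I=2U$. Since $G(x(t))=0$, we can exploit the scaling-invariant quantity $I^{1/2}U$: by compactness of the sphere $\{I=1,\,G=0\}$ together with the lower semi-continuity of $U$, the minimum $U_0=\min\{U(a):I(a)=1,\,G(a)=0\}$ exists and is strictly positive, and homogeneity gives $I(x)^{1/2}U(x)\geq U_0$ for every configuration $x$ with $G(x)=0$. Combining these ingredients yields
\[
\ddot I(t)\;\geq\;\frac{2U_0}{\sqrt{I(t)}},\qquad t\geq t_0.
\]
Note that $I(t)>0$ throughout $[t_0,+\infty)$, since the motion lies in $\Omega$ and the center of mass vanishes, so a zero of $I$ would be a total collision.

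Next I would show $I(t)\to+\infty$ and integrate. From $\ddot I>0$ comes the strict convexity of $I$; if $I$ were bounded above by some $M$, then $\ddot I\geq 2U_0/\sqrt M$ would force at least quadratic growth, a contradiction. Hence $I\to+\infty$, $\dot I\to+\infty$, and there exists $t_1\geq t_0$ with $\dot I>0$ on $[t_1,+\infty)$. Multiplying the differential inequality by $\dot I$ and integrating between $t_1$ and $t$ (using $\dot I/\sqrt I=2\,\tfrac{d}{dt}\sqrt I$) produces
\[
\dot I(t)^{2}\;\geq\;8\,U_0\sqrt{I(t)}\,+\,C
\]
for some constant $C$ depending on $\dot I(t_1)$ and $\sqrt{I(t_1)}$. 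Since $\sqrt I\to+\infty$, there is $t_2\geq t_1$ such that $\dot I\geq 2\sqrt{2U_0}\,I^{1/4}$ for every $t\geq t_2$, an inequality equivalent to $\tfrac{d}{dt}I^{3/4}\geq \tfrac{3}{2}\sqrt{2U_0}$. One last integration gives $I(t)\geq\beta(t-t_2)^{4/3}$ for some $\beta>0$ and all $t\geq t_2$.

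To upgrade this asymptotic estimate to a bound $I(t)\geq\alpha_0(t-t_0)^{4/3}$ valid on the whole half-line $[t_0,+\infty)$, I would split the argument. Choose $T\geq t_2$ large enough that $(t-t_2)/(t-t_0)\geq 1/2$ for all $t\geq T$; this yields $I(t)\geq(\beta/2^{4/3})(t-t_0)^{4/3}$ on $[T,+\infty)$. On the compact interval $[t_0,T]$, the continuous positive function $I$ has a positive minimum $m=\min_{[t_0,T]}I$, and $(t-t_0)^{4/3}\leq(T-t_0)^{4/3}$ there. Taking $\alpha_0=\min\bigl(\beta/2^{4/3},\,m/(T-t_0)^{4/3}\bigr)$ then produces the desired global inequality. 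The only delicate point in the plan is the passage through the additive constant $C$: it reflects the lack of any a priori control on $\dot I(t_1)$, and is the reason why the ``asymptotic bound plus compactness'' decomposition is preferable to integrating directly from $t_0$.
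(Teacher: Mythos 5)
Your argument is correct; note that the paper itself gives no proof of this statement (it is quoted directly from Pollard), and your derivation --- Lagrange--Jacobi with $h=0$, the homogeneity bound $I^{1/2}U\geq U_0>0$ obtained by compactness of the normalized configurations, and the Sundman-type integration $\dot I\,\ddot I\geq 4U_0\,\tfrac{d}{dt}\sqrt{I}$ followed by one more integration of $I^{-1/4}\dot I$ --- is exactly the classical route of Pollard's Theorem 1.2, so there is nothing in the paper to compare it against. The only literal slip is the constant in ``$\dot I\geq 2\sqrt{2U_0}\,I^{1/4}$ for $t\geq t_2$'': when the additive constant $C$ is negative this requires shrinking the constant slightly (e.g.\ $\dot I^2\geq 8U_0\sqrt{I}+C\geq 4U_0\sqrt{I}$ once $\sqrt{I}\geq -C/(4U_0)$), a point you already flag and which is harmless since the conclusion only needs some $\beta>0$, subsequently absorbed into $\alpha_0$ by your compactness argument on $[t_0,T]$.
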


\begin{theorem}[Pollard \cite{Poll}, theorem 5.1]
\label{PollardAlternative}
Let $x:[t_0 ,+\infty)\to \Omega$ be a motion
of zero energy such that the center of mass
satisfies $G(x(t))=0$ for all $t\geq t_0$.
Then either
\[U(t)\sim \alpha\,t^{-2/3} \;\;\;\textit{ and }
\;\;\;I(t)\sim (9/4)\,\alpha\, t^{4/3}\,,\]
for some positive constant $\alpha>0$, or
\[\lim r(t)\,t^{-2/3}=0 \;\;\textit{ and }
\;\;\lim I(t)\,t^{4/3}= +\infty\,,\]
where $r(t)=\min\set{\norm{r_{ij}(t)}\mid 1\leq i<j\leq N}$.
\end{theorem}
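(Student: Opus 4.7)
The plan is to combine the Lagrange--Jacobi identity with Sundman's inequality, working with the scale-invariant quantity $\tilde U:=I^{1/2}U$, which is bounded below by the positive constant $U_0=\min\{U(a)\mid I(a)=1\}$.

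Since the energy vanishes, $T=U$ and the Lagrange--Jacobi relation reads $\ddot I=2U>0$, so $I$ is strictly convex. Combined with Theorem~\ref{Ilowerbound} this forces $I(t)\to+\infty$ and $\dot I(t)$ to converge to some $\ell\in(0,+\infty]$. Cauchy--Schwarz applied to $\dot I/2=x\cdot\dot x$, together with $T=U$, yields Sundman's inequality in the form $\dot I^{\,2}\le 4I\ddot I$. Two useful computations follow. The first is
\[
(I^{3/2})''=\tfrac{3}{2}I^{1/2}\ddot I+\tfrac{3}{4}\,I^{-1/2}\dot I^{\,2}=3\tilde U+\tfrac{3}{4}\,I^{-1/2}\dot I^{\,2},
\]
which combined with Sundman yields the pinching $3\tilde U\le (I^{3/2})''\le 9\tilde U$. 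The second, obtained by differentiating $4I\ddot I-\dot I^{\,2}$ and using $\dddot I=2\dot U$, reads
\[
\tfrac{d}{dt}\bigl(4I\ddot I-\dot I^{\,2}\bigr)=8\,I^{1/2}\,\tilde U^{\,\prime},
\]
so that the variation of $\tilde U$ controls the sign of the derivative of the non-negative Sundman defect.

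These tools reduce the problem to analyzing a single function $\tilde U(t)$, and I would split along whether $\tilde U$ stays bounded. \emph{If $\tilde U$ is bounded}, the pinching forces $I(t)/t^{4/3}$ to lie between positive constants; sharpening this---by showing via the second identity that the Sundman defect tends to zero, i.e.\ the motion is asymptotically homothetic---would yield $\tilde U(t)\to\alpha$ for some $\alpha\ge U_0$, whence $U\sim\alpha\,t^{-2/3}$ follows from $U=\tilde U/I^{1/2}$ and the asymptotic for $I$ is obtained by integration of $\ddot I=2U$. \emph{If $\tilde U$ is unbounded}, one uses the elementary estimate $U\ge c_0/r$ (with $c_0>0$ depending only on the masses) to get $r\le c_0\,I^{1/2}/\tilde U$; the lower pinching $(I^{3/2})''\ge 3\tilde U$ then drives $I(t)\,t^{-4/3}\to+\infty$, and combining these gives $r(t)\,t^{-2/3}\to 0$.

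The main obstacle I anticipate is upgrading boundedness (resp.\ unboundedness) of $\tilde U$ to genuine convergence (resp.\ divergence). Ruling out oscillation of $\tilde U$ between distinct critical values of $U$ restricted to $\{I=1\}$---i.e.\ oscillation between different asymptotic shapes---is delicate and should rely on the monotone identity for the Sundman defect together with a compactness argument on the space of normalized configurations, possibly combined with a Lojasiewicz-type inequality near central configurations. This is where Pollard's argument is presumably most intricate.
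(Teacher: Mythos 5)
First, a point of comparison: the paper does not prove this statement at all. It is quoted as Theorem 5.1 of Pollard's 1967 paper and used as a black box, so there is no in-paper argument to measure yours against. (Two quirks of the quoted statement are worth flagging before you compute further: the second alternative should read $\lim I(t)\,t^{-4/3}=+\infty$, as you implicitly assumed; and with this paper's normalization $I=\sum m_i\norm{r_i}^2$ the Lagrange--Jacobi relation $\ddot I=2U$ together with $U\sim\alpha t^{-2/3}$ integrates to $I\sim\tfrac{9}{2}\alpha t^{4/3}$ --- the constant $\tfrac{9}{4}$ presupposes Pollard's convention $I=\tfrac12\sum m_i\norm{r_i}^2$.) Your preliminary identities are all correct and are indeed the standard toolkit here: $\ddot I=2U$, Sundman's $\dot I^2\le 4I\ddot I$, the pinching $3\tilde U\le (I^{3/2})''\le 9\tilde U$, and $\frac{d}{dt}\bigl(4I\ddot I-\dot I^2\bigr)=8I^{1/2}\tilde U'$.

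The proposal nevertheless has a genuine gap: the step you defer --- upgrading boundedness of $\tilde U$ to convergence of $U t^{2/3}$, and unboundedness to the full limits $r(t)t^{-2/3}\to0$ and $I(t)t^{-4/3}\to+\infty$ --- is not a technical refinement but the entire content of the theorem. What you have actually established yields only two-sided bounds $I\asymp t^{4/3}$, $U\asymp t^{-2/3}$ in the first case, and $\limsup$-type statements in the second; the theorem asserts genuine asymptotics and an exhaustive dichotomy. Moreover, the specific route you sketch for closing the gap is problematic. The identity for the Sundman defect does not show that defect tends to zero: $\tilde U'$ has no sign and $I^{1/2}\to\infty$, so the right-hand side is not controlled by the oscillation of $\tilde U$, and in any case asymptotic homothety is strictly stronger than what is needed (and is not known to hold in this generality). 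A Lojasiewicz argument near central configurations would first require knowing that the normalized configuration accumulates only on central configurations, which is itself a nontrivial Chazy-type theorem not available from your estimates. The missing ingredient in Pollard's mechanism is a pointwise derivative bound of the form $\abs{\dot U}\le C\,U\,T^{1/2}/r$, which under the hypothesis $r(t)\ge\epsilon\,t^{2/3}$ (the natural dichotomy, taken on $\liminf r(t)t^{-2/3}$ rather than on boundedness of $\tilde U$) makes $U t^{2/3}$ converge; combined with the monotone bounded quantity $\dot I\,I^{-1/4}$ --- the same $g$ that reappears in Proposition \ref{g-bounded} --- this gives the first alternative, and its failure gives the second. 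Until you supply an estimate of that kind, the argument does not close.
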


Recall now that the center of mass of
a given configuration $x=(r_1,\dots,r_N)\in E^N$
is barycenter of the weighted positions $r_i$.
We can define it as the linear map $G:E^N \to E$
given by $G(x)=M^{-1}\sum m_ir_i$, where
$M=\sum m_i$ is the total mass of the system.

The important property of the center of mass is
that he has an affine motion when it is computed
along every motion. This fundamental property
which correspond to the conservation of the linear moment,
allow us to reduce the study of a given motion
$x(t)$ to the corresponding {\em internal motion},
$y(t)=(r_1(t)-G(x(t)),\dots,r_N(t)-G(x(t))$
, since $y(t)$ must be
also a solution of Newton's equations. Therefore we can
say that every motion is the composition of an uniform
translation in space with a particular motion
contained in $\ker G$ which is nothing but the
orthogonal space of the diagonal $\Delta\subset E^N$
with respect to the mass inner product.

For the above reason it is usual in almost all the
literature on the subject, to assume that the motions
have the center of mass fixed at $0\in E$.

In order to prove our main results, we shall combine these
theorems of Pollard with the following lemma
recently proved by the second author, which exclude the
collinear case in theorem \ref{main},
since his proof uses Marchal's theorem.

\begin{lemma}[\cite{Mad2}, lemma 3]\label{fixedCM}
If $\dim E\geq 2$ and $x:[0,+\infty)\to E^N$ is a free time minimizer then the
center of mass $G(x(t))$ is constant.
\end{lemma}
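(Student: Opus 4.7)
The plan is to show that the constant center-of-mass velocity of $x$ must vanish. By Marchal's theorem and the regularity discussion of Section~2, $x$ is a smooth Newtonian motion on $(0,+\infty)$; conservation of linear momentum then gives $G(x(t))=G_0+Vt$ for a fixed $V\in E$ (extended by continuity to $[0,+\infty)$). Suppose for contradiction $V\neq 0$, and fix $t_0>0$ so that $x(t_0)\in\Omega$. I will exhibit, for some $T$ arbitrarily large, an absolutely continuous curve from $x(t_0)$ to $x(T)$ of action strictly less than $A(x|_{[t_0,T]})=\phi(x(t_0),x(T))$, contradicting free time minimization.

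The first step is a Galilean boost. Writing $w^{*}=(w,\ldots,w)\in E^N$ for the diagonal embedding of $w\in E$, define $y_T(t)=x(t)-(V(t-t_0))^{*}$ on $[t_0,T]$. Since $\dot G=V$ and $U$ is invariant under diagonal translations, expanding $|\dot x - V^{*}|^2$ in the mass inner product gives
\[
A(y_T)=A(x|_{[t_0,T]})-\tfrac{M(T-t_0)|V|^2}{2}.
\]
As $y_T\in\calC\bigl(x(t_0),\,x(T)-(V(T-t_0))^{*}\bigr)$, this yields $\phi\bigl(x(t_0),\,x(T)-(V(T-t_0))^{*}\bigr)\leq\phi(x(t_0),x(T))-M(T-t_0)|V|^2/2$. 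Combining with the translation invariance of $\phi$ under common diagonal shifts and the triangle inequality $\phi(a,c)\leq\phi(a,b)+\phi(b,c)$ produces the key estimate
\[
\phi\bigl(x(t_0),\,x(t_0)+(V(T-t_0))^{*}\bigr)\geq \tfrac{M(T-t_0)|V|^2}{2}. \quad(\ast)
\]

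The second step is to contradict $(\ast)$ by constructing an explicit competitor. Setting $p=x(t_0)$ and $c=V(T-t_0)\in E$, I will bound $\phi(p,p+c^{*})$ from above by concatenating three curves: (i) a parabolic homothetic ejection $p\to\lambda p$ along $f(t)p$ with $f(t)=(1+at)^{2/3}$, of action $2\sqrt{2I(p)U(p)}(\lambda^{1/2}-1)$ after optimizing $a$; (ii) a uniform diagonal translation $\lambda p\to\lambda p+c^{*}$, whose constant potential $U(\lambda p)=U(p)/\lambda$ makes the contribution $|c|\sqrt{2MU(p)/\lambda}$ after optimizing the traversal time; and (iii) a homothetic contraction back to $p+c^{*}$ whose action matches (i) by time reversal and translation invariance of $U$. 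All three pieces are collision-free whenever $p\in\Omega$, so the concatenation is admissible. Optimizing $\lambda$ yields $\phi(p,p+c^{*})\leq C|c|^{1/2}$ for $C=C(p,M)$, hence the left-hand side of $(\ast)$ is $O(\sqrt{T-t_0})$. For $T$ large this contradicts the linear lower bound $(\ast)$, forcing $V=0$.

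The main obstacle will be to make this $O(|c|^{1/2})$ upper bound rigorous with an explicit constant, since its exponent must be strictly below one for the linear lower bound $(\ast)$ to outgrow it as $T\to\infty$. The three-stage construction delivers exponent $1/2$ essentially because temporarily scaling $p$ outward shrinks $U$ by a factor $\lambda^{-1}$, making the translation leg nearly free of action while the ejection and contraction legs cost only $O(\lambda^{1/2})$; balancing these two yields $1/2$. Care is required only in verifying that each piece is an admissible absolutely continuous curve (automatic as they are smooth and collision-free) and in tracking the dependence of $C$ on $I(p)$, $U(p)$, and the masses.
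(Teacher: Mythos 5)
Your argument is correct. Note that the paper does not actually prove this lemma: it is imported verbatim from the reference [Mad2] (lemma 3), so there is no in-paper proof to compare against. What you have written is essentially a self-contained reconstruction of the proof given in that reference, built on the same two pillars: (1) a Galilean boost $y_T(t)=x(t)-(V(t-t_0))^{*}$, which costs exactly $\tfrac{1}{2}M|V|^2(T-t_0)$ of action and, combined with the triangle inequality and the invariance of $\phi$ under diagonal translations, forces $\phi\bigl(x(t_0),x(t_0)+(V(T-t_0))^{*}\bigr)$ to grow at least linearly in $T$; and (2) the sublinear bound $\phi(p,p+c^{*})=O(|c|^{1/2})$, obtained by dilating outward to cheapen the potential before translating. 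Your computations check out: the boost identity follows from $\int_{t_0}^{T}\dot x\cdot V^{*}\,dt=M(T-t_0)|V|^2$; the ejection leg along $f(t)p$ with $f=(1+at)^{2/3}$ does give $\bigl(\tfrac{2a}{3}I(p)+\tfrac{3}{a}U(p)\bigr)(\lambda^{1/2}-1)$, optimized to $2\sqrt{2I(p)U(p)}\,(\lambda^{1/2}-1)$; the translation leg gives $|c|\sqrt{2MU(p)/\lambda}$; and balancing $\lambda\sim|c|$ yields the exponent $1/2$ (the optimal $\lambda$ exceeds $1$ precisely in the large-$|c|$ regime you need). The only hypotheses you use beyond the definition of a free time minimizer are Marchal's theorem (so that $x$ is a genuine Newtonian motion on $(0,+\infty)$ with affine center of mass, which is where $\dim E\geq 2$ enters) and the finiteness of $\phi(x(t_0),x(T))$ needed to cancel it in the derivation of $(\ast)$ — both are available. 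I see no gap.
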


However, we expect that this lemma remains true
even in case $\dim E=1$, as well as theorem \ref{main}.
This would be true for example if we could prove
the following conjecture.

\begin{conjecture}
If $\dim E=1$ and $x:[0,+\infty)\to E^N$
is a free time minimizer then there is a
finite set $\calT_x\subset [0,+\infty)$
such that $x(t)$ is a configuration with collisions
if and only if $t\in \calT_x$.
\end{conjecture}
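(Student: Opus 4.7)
The plan is to split $\calT_x$ into an ``asymptotic'' part and a ``bounded'' part and handle them separately. First, I would show that $x(t) \in \Omega$ for all $t \geq T$ (for some $T > 0$), so that $\calT_x \subset [0, T]$; then I would show that $\calT_x \cap [0, T]$ has no accumulation point, and hence is finite.

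For the first step, the aim is to run the argument of Theorem \ref{main} in dimension one. Between collisions the curve is a smooth Newton motion and Lemma \ref{zeroenergy} gives zero energy on each open sub-arc; if there is a terminal no-collision interval $[T_0, +\infty)$, Pollard's dichotomy (Theorem \ref{PollardAlternative}) forces $I(t) \to \infty$ with the mutual distances growing enough to prevent further collisions, which justifies the terminal interval a posteriori. The difficulty is that Pollard, as in the paper's proof of Theorem \ref{main}, requires the center of mass to be fixed---supplied in dimension $\geq 2$ by Lemma \ref{fixedCM} via Marchal's theorem. In dimension one I would attempt a purely variational substitute: if $G(x(\cdot))$ were non-constant on a no-collision sub-interval, an appropriate re-centering of the curve over that sub-interval should produce a competitor in the same class with strictly smaller action, contradicting the minimization.

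For the second step, suppose by contradiction that $t_n \in \calT_x$ with $t_n \to t^* \in [0, T]$. Since the set of collision configurations is closed in $E^N$ and $x$ is continuous, $x(t^*) \in \calT_x$ as well. There are only finitely many pairs, so by pigeonhole I may pass to a subsequence along which a single pair $(i,j)$ satisfies $r_{ij}(t_n) = 0$ for every $n$. On each gap $(t_n, t_{n+1})$ the curve is a smooth zero-energy Newton solution, and the standard asymptotics near a binary collision give $\norm{r_{ij}(t)} \sim c_n \abs{t - t_n}^{2/3}$. The plan is then to bound the maximal separation $\max_{[t_n,t_{n+1}]} \norm{r_{ij}}$ below by a positive constant independent of $n$, which via those asymptotics translates into a positive lower bound on $t_{n+1} - t_n$, contradicting $t_n \to t^*$.

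The hard part is precisely this last bound. No naive energy argument yields it, since the ``pair energy'' of $(i,j)$ is not conserved in the $N$-body problem and the total energy vanishes. The decisive tool should be the minimizing property itself: in dimensions $\geq 2$ one would compare the bouncing arc to a perturbation moving the two bodies past each other in a direction transverse to their collision line---essentially Marchal's argument, which fails in dimension one. A one-dimensional substitute---perhaps constructed via Levi-Civita regularization of each binary collision, then comparing the minimizer to an alternative in which the two bodies exchange labels across each collision---seems to be the genuinely new ingredient required, and is likely why the authors state this only as a conjecture.
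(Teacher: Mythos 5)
This statement is stated in the paper as an open \emph{conjecture}: the authors explicitly say they cannot prove it, and they offer no argument for it. So there is no ``paper proof'' to compare against, and the relevant question is whether your proposal closes the gap. It does not, and to your credit you essentially say so yourself. Two steps in your plan are genuine holes rather than routine verifications. First, your substitute for Lemma \ref{fixedCM} in dimension one does not work as described: along any Newton arc the center of mass moves affinely, $G(x(t))=G_0+Vt$, and subtracting this drift does lower the kinetic part of the action by $\frac{M}{2}\norm{V}^2(b-a)$ while leaving $U$ unchanged --- but the re-centered curve joins \emph{different endpoints}, namely $x(a)-\delta_{G(x(a))}$ and $x(b)-\delta_{G(x(b))}$, so it is not a competitor in $\calC(x(a),x(b))$. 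One must pay to restore the endpoints, and controlling that cost against the savings is exactly the content of \cite{Mad2}, where the argument goes through Marchal's theorem and hence is unavailable for $\dim E=1$. Second, the step excluding collisions for large $t$ is circular as written: Pollard's theorems \ref{Ilowerbound} and \ref{PollardAlternative} take as hypothesis a motion $[t_0,+\infty)\to\Omega$, so you cannot invoke them to produce the terminal collision-free interval whose existence they presuppose; and in any case the second Pollard alternative ($r(t)t^{-2/3}\to 0$) does not by itself keep $r(t)$ away from zero.

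The third gap is the one you name explicitly: a uniform positive lower bound on the time between consecutive binary collisions accumulating at a finite $t^*$. The Sundman asymptotics $\norm{r_{ij}(t)}\sim c\abs{t-t_n}^{2/3}$ do reduce this to a lower bound on the maximal separation of the colliding pair on each gap, but producing that bound from the free-time-minimization property requires a local perturbation argument at a one-dimensional binary collision --- precisely the situation in which Marchal's averaging over transverse directions is unavailable. Your suggestion of Levi--Civita regularization plus a label-exchange comparison is a sensible direction, but it is an idea for a proof, not a proof. In short: the decomposition of $\calT_x$ into a bounded part with no accumulation points and an empty tail is a reasonable strategy, but each of the three key steps (fixed center of mass, no collisions near infinity, no accumulation of collision times) currently rests on an unproved claim, so the conjecture remains open after your argument.
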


The collinear case seems to be more approachable,
since we know that there are exactly $n!$ central
configurations and it can be proved that for generic
values of the masses only two (symmetric) of such
configurations are minimal.

We start the proof with the analysis of
the inertia of a free time minimizer.

\begin{proposition}\label{g-bounded}
Let $x:[t_0,+\infty)\to\Omega$ be a free time minimizer.
Then the function $g:[t_0,+\infty)\to\R$ defined by
\[g(t) = \dot I(t)\,I(t)^{-1/4}\]
is increasing and bounded.
\end{proposition}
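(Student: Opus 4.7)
The plan is to handle monotonicity and boundedness separately.

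\textbf{Monotonicity.} Lemma~\ref{zeroenergy} gives that $x$ has zero energy, so the Lagrange--Jacobi identity reduces to $\ddot I(t)=2U(x(t))$. Applying Cauchy--Schwarz to $\dot I=2\,x\cdot\dot x$ (the Sundman-type inequality of the preliminaries) together with $T=U$ yields $\dot I^{\,2}\leq 4\,I\ddot I$. Differentiating $g=\dot I\,I^{-1/4}$ then gives
\[
\dot g(t)\;=\;I^{-5/4}\!\left(I\ddot I-\tfrac14\dot I^{\,2}\right)\;\geq\;0,
\]
so $g$ is non-decreasing on $[t_0,+\infty)$.

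\textbf{Boundedness.} The first move is to rewrite $\dot I$ as an action. Integrating $\ddot I=2U$ and using $T=U$ one obtains $A(x|_{[t_0,t]})=2\int_{t_0}^t U\,ds=\dot I(t)-\dot I(t_0)$, and the free-time minimizing hypothesis identifies this action with $\phi(x(t_0),x(t))$, so
\[
\dot I(t)\;=\;\dot I(t_0)+\phi(x(t_0),x(t)).
\]
The whole problem is then reduced to the universal a priori estimate
\[
\phi(x,y)\;\leq\; M_0\bigl(I(x)^{1/4}+I(y)^{1/4}\bigr),\qquad x,y\in E^N,
\]
for some constant $M_0>0$ depending only on the masses. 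I would prove this by combining the triangle inequality $\phi(x,y)\leq\phi(x,0)+\phi(0,y)$ (concatenation of paths), the homogeneity of Corollary~\ref{phi-homogeneo} at $0$ (which gives $\phi(z,0)=I(z)^{1/4}\,\phi(z/I(z)^{1/2},0)$ for $z\neq 0$), and the boundedness of $\phi(\cdot,0)$ on the compact sphere $\{I=1\}$---itself a consequence of the H\"older continuity of $\phi$ from \cite{Mad} together with the existence of at least one explicit finite-action path to $0$, e.g.\ the time-reversed parabolic homothetic ejection by a minimal configuration. Dividing the resulting bound by $I(t)^{1/4}$ gives
\[
g(t)\;\leq\;\frac{\dot I(t_0)+M_0\,I(x(t_0))^{1/4}}{I(t)^{1/4}}\;+\;M_0,
\]
and since $\ddot I>0$ makes $I$ strictly convex and positive on $[t_0,+\infty)$, $I$ is bounded below by a positive constant, so the first summand is bounded; if in addition $I$ stays bounded above, then $\dot I$ is monotone and bounded and $g$ is controlled directly. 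Either way $g$ is bounded above.

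\textbf{Main obstacle.} The crux is the uniform upper estimate on $\phi$ in terms of $I^{1/4}$. Everything else is essentially algebra built on $\ddot I=2U$ and the minimization property. The delicate point is the use of the total collision $0\in E^N$ as a reference, which demands the finiteness and continuity of $\phi(\cdot,0)$; this rests on the H\"older regularity of the Ma\~n\'e potential of \cite{Mad} and on the homogeneity of the Newtonian problem.
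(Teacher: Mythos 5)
Your monotonicity step coincides with the paper's, and in fact you use the correct Sundman constant: with $h=0$ one has $\dot I^{\,2}\le 8IT=4I\ddot I$, so $\dot g\ge 0$ with equality exactly along homothetic arcs (consistent with $g$ being constant on the parabolic homothetic minimizer); the text's ``$2IT-\dot I^{\,2}\ge 0$'' is off by a factor of $4$. For boundedness you take a genuinely different route. The paper compares $x|_{[t_0,t]}$ with the homothetic curve $\hat x(s)=\rho(s)a$ having the same radial part; after the triangle inequality the radial kinetic terms cancel and one is left with $\tfrac12(\dot I(t)-\dot I(t_0))\le U(a)\int\rho^{-1}+\Lambda(\rho_0^{1/2}+\rho_t^{1/2})$, which is then closed with Pollard's lower bound $I(t)\ge\alpha_0(t-t_0)^{4/3}$ (Theorem \ref{Ilowerbound}), hence implicitly with the center-of-mass reduction. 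You instead use the identity $\dot I(t)-\dot I(t_0)=\int 2U=A(x|_{[t_0,t]})=\phi(x(t_0),x(t))$ together with the a priori bound $\phi(x,y)\le M_0\bigl(I(x)^{1/4}+I(y)^{1/4}\bigr)$, obtained by routing through the total collision and using the homogeneity of Corollary \ref{phi-homogeneo} plus compactness of the sphere $\{I=1\}$. This is legitimate: $\phi$ is finite and H\"older continuous on all of $E^N\times E^N$ by \cite{Mad}, and $\phi(a_0,0)<+\infty$ via the parabolic homothetic ejection, so $M_0=\sup\{\phi(u,0): I(u)=1\}$ is finite. What your route buys is independence from Pollard's Theorem 1.2 and from Lemma \ref{fixedCM}; both proofs share the same skeleton, namely that $\dot I(t)-\dot I(t_0)$ equals the action of the minimizer, which the minimizing property bounds by a quantity of order $I(t)^{1/4}$.

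One small gap: a positive strictly convex function on a half-line need not be bounded below by a positive constant (consider $e^{-t}$), so your justification that $I(t)^{-1/4}$ stays bounded is incomplete as written. It is easily repaired: since $\ddot I=2U>0$, $\dot I$ is increasing; if $\dot I(t_1)\ge 0$ for some $t_1$ then $I$ is nondecreasing on $[t_1,+\infty)$ and hence bounded below by $\min_{[t_0,t_1]}I>0$, while if $\dot I<0$ throughout and $I\to 0$ then $\ddot I=2U\ge 2U_0\,I^{-1/2}\to+\infty$ forces $\dot I\to+\infty$, a contradiction. With that fix your final display bounds $g$, and the alternative clause about $I$ being bounded above is unnecessary.
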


\begin{proof}
Since $h=0$, we have that $T(t)=U(t)$
for all $t\geq t_0$, and the Lagrange-Jacobi
relation gives $\ddot I(t)=2U(t)=2T(t)$.
Thus the derivative of $g$ is
\begin{eqnarray*}
\dot g & = & \ddot I\,I^{-1/4}-
\frac{1}{4}\dot I^2I^{-5/4}\\
& = & \frac{1}{4}\left(
8\,IT-\dot I^2
\right)\,I^{-5/4}.
\end{eqnarray*}
But on the other hand, we know that
$2\,IT-\dot I^2\geq 0$. Therefore we conclude
that
\[\dot g \geq \frac{3}{2}\,UI^{-1/4}> 0.\]
Thus we have proved that the function $g$
is increasing. We must use now the minimization
property in order to prove that $g$ is bounded.

Fix $t>t_0$ and any normal configuration
$a\in E^N$, that is, such that $I(a)=1$.
We will compare the Lagrangian
action of the free time minimizer $x$ restricted
to the interval $[t_0,t]$ with the action of
the homothetic curve $\hat x:[t_0,t]\to E^N$
given by $\hat x(s)=\rho(s)a$ where
$\rho(s)=I(x(s))^{1/2}$. Here we will use the
polar notation $x=\rho\,u$ where $u(s)$
is the normalized configuration of $x(s)$.
Also we will write $\rho_0$ and $\rho_t$
for denote $\rho(t_0)$ and $\rho(t)$ respectively,
as well as $u_0$ and $u_t$ for $u(t_0)$ and
$u(t)$.

By the triangular inequality we have
\[A(x\mid_{[t_0,t]})=\phi(\rho_0u_0,
\rho_tu_t)\leq A(\hat x)+
\phi(\rho_0u_0,\rho_0 a)+
\phi(\rho_t a,\rho_tu_t).\]
Moreover, since $S=\set{u\in E^N\mid I(u)=1}$
is compact, using corollary \ref{phi-homogeneo}
we can write
\[A(x\mid_{[t_0,t]})\leq A(\hat x)+
\Lambda(\rho_0^{1/2}+\rho_t^{1/2})\,,\]
where $\Lambda=\max\set{\phi(x,y)\mid x,y\in S}$.

Using the formula for the
action in polar coordinates
\ref{polaraction-expression} we have that
\[A(x\mid_{[t_0,t]})=
\frac{1}{2}\int_{t_0}^t\dot\rho(s)^2\,ds+
\frac{1}{2}\int_{t_0}^t\rho(s)^2\dot u(s)^2\,ds+
\int_{t_0}^t\rho(s)^{-1}U(u(s))\,ds\,,\]
and that
\[A(\hat x)=
\frac{1}{2}\int_{t_0}^t\dot\rho(s)^2\,ds+
U(a)\int_{t_0}^t\rho(s)^{-1}\,ds\,.\]
We note that both expressions have the same
first term, that the second term in the
expression of $A(x\mid_{[t_0,t]})$
is positive, and that using the
Lagrange-Jacobi relation (which gives
$\ddot I=2\,U$ in this case) we can write
\[\int_{t_0}^t\rho(s)^{-1}U(u(s))\,ds=
\int_{t_0}^tU(x(s))\,ds=
\frac{1}{2}\int_{t_0}^t\ddot I(s)\,ds=
\frac{1}{2}(\dot I(t)-\dot I(t_0))\,.\]
Therefore, from the above considerations
and the previous inequality we deduce that
\[\frac{1}{2}(\dot I(t)-\dot I(t_0))\leq
U(a)\int_{t_0}^t\rho(s)^{-1}\,ds+
\Lambda(\rho_0^{1/2}+\rho_t^{1/2})\,.\]
But $\rho(s)=I(s)^{1/2}$, thus by theorem
\ref{Ilowerbound} we have
\[\int_{t_0}^t\rho(s)^{-1}\,ds\leq
\alpha_0^{-1/2}\int_{t_0}^t(s-t_0)^{-2/3}\,ds=
3\alpha_0^{-1/2}\,
(t-t_0)^{1/3}\,,\]
and we get the inequality
\[\dot I(t)\leq \alpha_1(t-t_0)^{1/3}+
\alpha_2\,I(t)^{1/4}+\alpha_3\]
for some positive constants $\alpha_1$,
$\alpha_2$ and $\alpha_3$. Finally,
dividing by $I(t)^{1/4}$ and
using again theorem \ref{Ilowerbound}
we get
\[g(t)=\dot I(t)\,I(t)^{-1/4}\leq
\alpha_1\alpha_0^{-1/4}+\alpha_2+
\alpha_3\alpha_0^{-1/4}(t-t_0)^{-1/3}\]
from which we conclude that the function
$g$ is bounded.
\end{proof}

We can now deduce the following two corollaries

\begin{corollary}\label{Iuperbound}
If $x:[t_0,+\infty)\to\Omega$ is a free time minimizer then
there is a constant $\beta>0$
such that $I(t)\leq \beta\, t^{4/3}$.
\end{corollary}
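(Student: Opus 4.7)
The plan is to convert the boundedness of $g(t)=\dot I(t)\,I(t)^{-1/4}$, given by Proposition \ref{g-bounded}, into a growth estimate on $I(t)$ itself via an elementary scalar differential inequality. This is essentially a one-line deduction from the proposition; the real work has already been done there.

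First I would fix a constant $M>0$ such that $g(t)\leq M$ for every $t\geq t_0$ (this exists by Proposition \ref{g-bounded}). Rewriting this as $\dot I(t)\leq M\,I(t)^{1/4}$, the key observation is that the left-hand side is, up to a multiplicative constant, the time derivative of $I^{3/4}$:
\[\frac{d}{dt}\bigl(I(t)^{3/4}\bigr)=\frac{3}{4}\,\dot I(t)\,I(t)^{-1/4}=\frac{3}{4}\,g(t)\leq \frac{3M}{4}.\]
Integrating from $t_0$ to $t$ and raising to the $4/3$ power yields
\[I(t)\leq\Bigl(I(t_0)^{3/4}+\frac{3M}{4}(t-t_0)\Bigr)^{4/3}.\]

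Finally, since the right-hand side is asymptotically equivalent to $\bigl(3M/4\bigr)^{4/3}(t-t_0)^{4/3}$ as $t\to+\infty$, and $(t-t_0)^{4/3}/t^{4/3}\to 1$, one can find a constant $\beta_\infty>0$ such that $I(t)\leq \beta_\infty\, t^{4/3}$ for all sufficiently large $t$. On any compact initial subinterval, $I$ is continuous and hence bounded, so by enlarging $\beta_\infty$ we obtain a single constant $\beta>0$ for which $I(t)\leq \beta\, t^{4/3}$ holds throughout $[t_0,+\infty)$.

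There is no substantive obstacle in this corollary: the whole proof is the integration of a separable differential inequality, and the nontrivial minimization input (the comparison against homothetic curves together with Pollard's lower bound) has already been absorbed into Proposition \ref{g-bounded}. The only cosmetic subtlety is the replacement of $(t-t_0)^{4/3}$ by $t^{4/3}$, which is immediate from the continuity of $I$ on any compact subinterval of the domain.
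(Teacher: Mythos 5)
Your proof is correct and follows essentially the same route as the paper: both integrate the bound $g(t)=\dot I\,I^{-1/4}\leq M$ from Proposition \ref{g-bounded} by recognizing $\dot I\,I^{-1/4}$ as a constant multiple of $\frac{d}{dt}(I^{3/4})$, obtaining $I(t)\leq(\beta_1(t-t_0)+\beta_2)^{4/3}$ and then absorbing constants. The only cosmetic difference is that you spell out the passage from $(t-t_0)^{4/3}$ to $t^{4/3}$, which the paper leaves implicit.
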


\begin{proof}
By the previous proposition we know that
there is a positive constant $\beta_0$
such that $\dot I\,I^{-1/4}<\beta_0$.
Integrating between $t_0$ and $t>t_0$ we
get
\[\frac{4}{3}\,(I(t)^{3/4}-I(t_0)^{3/4})
\leq \beta_0(t-t_0)\]
hence
\[I(t)\leq (\beta_1(t-t_0)+\beta_2)^{4/3}\]
for some positive constants $\beta_1$ and $\beta_2$.
Therefore $I(t)\,t^{-4/3}$ must be bounded.
\end{proof}

\begin{corollary}\label{UandI}
If $\dim E\geq 2$, and
$x:[t_0,+\infty)\to E^N$
is a free time minimizer, then
\[U(t)\sim \alpha\,t^{-2/3} \;\;\;\textit{ and }
\;\;\;I(t)\sim (9/4)\,\alpha\, t^{4/3}\,,\]
for some positive constant $\alpha>0$.
\end{corollary}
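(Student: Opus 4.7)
The plan is to read the corollary as a direct confrontation between Pollard's alternative (Theorem \ref{PollardAlternative}) and the upper bound on the moment of inertia just established in Corollary \ref{Iuperbound}. All the heavy lifting has already been done; what remains is to verify that the hypotheses of Pollard's alternative are met and to observe that one of its two branches is incompatible with $I(t)\leq\beta t^{4/3}$.

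First I would check the hypotheses of Theorem \ref{PollardAlternative} for a free time minimizer $x:[t_0,+\infty)\to E^N$. By Marchal's theorem, $x(t)\in\Omega$ on the open interval $(t_0,+\infty)$, and on this interval $x$ is a classical solution of Newton's equations. By Lemma \ref{zeroenergy} its energy is $h=0$. By Lemma \ref{fixedCM} (which uses $\dim E\geq 2$) the center of mass $G(x(t))$ is constant. The Newtonian Lagrangian $L=T+U$ is invariant under simultaneous translation of all bodies by a fixed vector of $E$, and hence so are $A$ and $\phi$; translating the inertial frame by $-G(x(t_0))$ preserves the free time minimizer property while placing the center of mass at $0\in E$. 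Thus the translated curve satisfies all the assumptions of Pollard's alternative.

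Next I would apply Theorem \ref{PollardAlternative}. It gives exactly one of the following:
\begin{itemize}
\item[(i)] $U(t)\sim \alpha\,t^{-2/3}$ and $I(t)\sim (9/4)\,\alpha\, t^{4/3}$ for some $\alpha>0$;
\item[(ii)] $\lim_{t\to+\infty} r(t)\,t^{-2/3}=0$ and $\lim_{t\to+\infty} I(t)\,t^{-4/3}=+\infty$.
\end{itemize}
By Corollary \ref{Iuperbound}, however, there is $\beta>0$ with $I(t)\leq\beta\,t^{4/3}$ for all sufficiently large $t$, so $I(t)\,t^{-4/3}$ stays bounded. This flatly contradicts the second limit in (ii), so alternative (ii) is excluded and (i) must hold, which is precisely the statement of the corollary.

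There is no real obstacle here; the one subtle point to mention clearly is the reduction to the center-of-mass-zero frame, which is what ties the corollary to the standing hypothesis $\dim E\geq 2$ via Lemma \ref{fixedCM} (the only place where dimension enters). Everything else is a one-line appeal to Pollard combined with the quantitative bound of the preceding corollary.
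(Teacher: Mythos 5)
Your proposal is correct and follows essentially the same route as the paper: reduce to the zero center-of-mass frame via Lemma \ref{fixedCM}, check Pollard's hypotheses (zero energy, collisionlessness via Marchal), and rule out the second branch of Theorem \ref{PollardAlternative} using the upper bound of Corollary \ref{Iuperbound}. The paper merely phrases the frame change through the internal motion $y(t)=x(t)-\delta_G$ and notes explicitly that $I(x(t))$ and $I(y(t))$ differ by a constant while $U(x(t))=U(y(t))$, so the asymptotics transfer back to $x$.
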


\begin{proof}
By lemma \ref{fixedCM} we know that $G(x(t))$
is constant. If we call $G$ this constant vector
of $E$,
$\delta_G=(G,\dots,G)\in E^N$
the configuration of total collision at $G$,
and we write $y(t)=x(t)-\delta_G$ for
the \emph{internal motion} of $x$,
then it is well known or easy to check that:
\begin{enumerate}
\item $y:[t_0,+\infty)\to E^N$ is also a
free time minimizer,
\item $G(y(t))=0$ for all $t\geq t_0$,
\item $I(x(t))=M\norm{G}^2+I(y(t))$ where
$M=m_1+\dots+m_N$ is the total mass, and
\item $U(x(t))=U(y(t))$ for all $t\geq t_0$.
\end{enumerate}
In particular, Marchal's theorem implies
that $y(t)\in\Omega$ for every $t>t_0$ and
we can apply corollary \ref{Iuperbound}
to the curve $y(t)$.
The proof follows then from Pollard's theorem
\ref{PollardAlternative}.
\end{proof}

\begin{proof}[Proof of theorem \ref{main}.]
Suppose that $x:[t_0,+\infty)\to E^N$ is a free time minimizer
and that $\dim E\geq 2$. By corollary \ref{UandI} we have
that $U(t)\to 0$. This implies that all mutual distances
$r_{ij}(t)$ tend to infinity. Moreover, since by lemma
\ref{zeroenergy} we know that the energy of the
motion is zero, we also have that $T(t)\to 0$, which is
equivalent to say that $\dot r_i(t)\to 0$ for all
$i=1,\dots,N$.
\end{proof}

\begin{proof}[Proof of theorem \ref{nocompleteFTM}.]
Suppose that $x:(-\infty,+\infty)\to E^N$
is a free time minimizer and that $\dim E\geq 2$.
Since $I(t)>0$ the normalized
configuration $u(t)=I(t)^{-1/2}x(t)$ is well defined
for all $t\in\R$. The set of normal configurations
$S=\set{x\in E^N\mid I(x)=1}$ is compact, therefore there
should be an increasing sequence of positive integers
$(n_k)_{k\geq 0}$ and normal configurations
$a,b\in S$ such that
$\lim u(-n_k)=a$ and $\lim u(n_k)=b$.
Note that application of corollary \ref{UandI}
we know that
there are positive constants $\alpha,\beta>0$
such that
$I(t)\sim \alpha^2\,t^{4/3}$ for $t\to -\infty$
and
$I(t)\sim \beta^2\,t^{4/3}$ for $t\to +\infty$.

To each $k\geq 0$ we will associate a free time
minimizer defined on the interval $[-1, 1]$ using
corollary \ref{inv-rescalingFTM} and the restriction
of $x$ to the interval $[-n_k,n_k]$. Thus
the sequence of free time minimizers is given
by
\[\gamma_k:[-1,1]\to E^N\;\;\;
\gamma_k(t)=n_k^{-2/3}x(n_k\,t)\,.\]
Hence we have
\[\lim \gamma_k(-1)=
\lim (n_k^{-2/3}I(-n_k)^{1/2}).
\lim u(-n_k)=\alpha\, a\,,\]
and
\[\lim \gamma_k(1)=
\lim (n_k^{-2/3}I(n_k)^{1/2}).
\lim u(n_k)=\beta\, b\,.\]
Since for each $k\geq 0$ the curve $\gamma_k$
is a free time minimizer, we have that
\[\lim A(\gamma_k)=
\lim\phi(\gamma_k(-1),\gamma_k(1))=
\phi(\alpha\,a,\beta\,b)\,.\]
Therefore we can apply proposition \ref{subsequence},
and we deduce that there is a subsequence of
$(\gamma_k)_{k\geq 0}$
which converges uniformly to some free time minimizer
$\gamma:[-1,1]\to E^N$. In particular, we must have
$A(\gamma)=\phi(\alpha\,a,\beta\,b)$ and $\gamma(0)=0$,
but this is impossible because it contradicts Marchal's
theorem.
\end{proof}

\textsl{Acknowledgments.}
The first author would like to thank Soledad Villar
for her infinite patience, support and for her useful advise.
The second author would like to express its gratitude to
Alain Chenciner, Albert Fathi, Christian Marchal,
Susanna Terracini and Andrea Venturelli for very helpful
discussions and comments on the subject.
Finally, we thank the referee for providing constructive
comments and help in improving the contents of this paper.

\end{document}